\renewcommand{\epsilon}{\varepsilon}
\renewcommand{\setminus}{\smallsetminus}
\theoremstyle{plain}
\newtheorem{theorem}{Theorem}
\newtheorem*{theorem*}{Theorem}
\newtheorem{corollary}{Corollary}[section]
\newtheorem{lemma}[corollary]{Lemma}
\newtheorem{proposition}[corollary]{Proposition}
\theoremstyle{definition}
\newtheorem*{definition}{Definition}
\theoremstyle{remark}
\renewcommand{\mathbb}{\mathds}
\newcommand{\Prob}{{\mathbb P}}
\newcommand{\E}{{\mathbb E}}
\newcommand{\R}{{\mathbb R}}
\newcommand{\C}{{\mathbb C}}
\newcommand{\Half}{{\mathbb H}}
\newcommand{\Disk}{{\mathbb D}}
\newcommand{\LL}{{\mathbb L}}
\newcommand{\1}{{\mathds{1}}}
\newcommand{\bd}{\partial}
\newcommand{\W}{{\mathcal W}}
\renewcommand{\Re}{{\rm Re}}
\newcommand{\Cov}{{\rm Cov}}
\newcommand{\SLE}{{\mathrm{SLE}}}
\newcommand{\mwhere}{{\ \ \text{where} \ \ }}
\newcommand{\mand}{{\ \ \text{and} \ \  }}
\newcommand{\rd}{{d}}
\newcommand{\dd}{{\ \rd}}
\newcommand{\ds}{{\dd s}}
\newcommand{\dt}{{\dd t}}
\begin{document}

\title{The parafermionic observable in $\SLE$}
\author{Brent Morehouse Werness}

\keywords{Schramm--Loewner Evolutions; parafermionic observable; conformal invariance; scaling limit}
\subjclass[2010]{60J67 \and 82B27}
\maketitle

\begin{abstract}
	The parafermionic observable has recently been used by number of authors to study discrete models, believed to be conformally invariant and to prove convergence results for these processes to $\SLE$ \cite{OffCrit,buzios,Connective,energy,cardy2,cardy1,cardy0,towards,ising1,discrete}.  We provide a definition for a one parameter family of continuum versions of the paraferminonic observable for $\SLE$, which takes the form of a normalized limit of expressions identical to the discrete definition.  We then show the limit defining the observable exists, compute the value of the observable up to a finite multiplicative constant, and prove this constant is non-zero for a wide range of $\kappa$.  Finally, we show our observable for $\SLE$ becomes a holomorphic function for a particular choice of the parameter, which helps illuminate a fundamental property of the discrete observable.

\end{abstract}

\section{Introduction}

The paraferminonic observable is a tool which has been used successfully to study discrete models in both the physics \cite{cardy2,cardy1,cardy0} and mathematics \cite{OffCrit,buziosD,Connective,energy,towards,ising1,discrete} literature.  In particular, the program by Smirnov and coauthors to prove the convergence of various discrete processes to $\SLE_\kappa$ \cite{buziosD,towards,discrete} is of great interest. This program has already produced a proof of the convergence of Ising random cluster model boundaries and Ising cluster boundaries to $\SLE_{16/3}$ and $\SLE_3$, respectively \cite{ising1}.  Recent work by Duminil-Copin and Smirnov in \cite{Connective} use these same techniques to rigorously compute the connective constant for self-avoiding walk and provide an approach to showing planar self-avoiding walk converges in the scaling limit to $\SLE_{8/3}$.

Central to this program is an object known as the parafermionic observable, which is a one-parameter family, indexed by the \emph{spin} $\sigma$, of observables defined by the tuning number of the discrete curves.  The fundamental property of this observable is that one may often find a value of $\sigma$ for which the observable is discretely holomorphic in the end-points of the curve, which allows one to deduce conformal invariance.  The reason for the existence of such a $\sigma$ is not well-understood.

In this paper, we provide a definition for a similar observable for radial $\SLE_\kappa$, derived from the discrete definition as reviewed in Section \ref{discrete}. We then prove the existence of the observable, compute the value of the observable up to an overall multiplicative constant, and finally show this multiplicative constant is non-zero for a large range of $\kappa$ by producing an explicit lower bound.  In this setting, we may then compute in the continuum the proper value of $\sigma$ to produce the holomorphic observable, providing some understanding of the value of $\sigma$ observed in the discrete setting.

We note that a related study was conducted, from a physical perspective, for chordal $\SLE$ using a different definition for the continuum parafermionic observable based on the probability that an $\SLE$ curve passes to the left of a point \cite{cardy0}, and from the point of view of conformal field theory \cite{cardyTalk}.

\subsection{The discrete parafermionic observable}\label{discrete}

Before discussing the continuum generalization of the parafermionic observable, we first review the discrete observable through the example of the self-avoiding walk (SAW) as used in \cite{Connective}.  Let $\LL$ denote the hexagonal lattice  in $\C$ with unit distance between adjacent points.  Fix a bounded domain $\Omega \subseteq \C$ and let $V(\Omega)$ be the set of vertices of $\LL \cap \Omega$.  We turn $V(\Omega)$ into a graph by adding every half-edge adjacent to any vertex $V(\Omega)$ and mid-edge vertices along every edge.  See Figure \ref{disFig} for an illustration of this setup.

\begin{figure}[htb]
	\labellist
		\pinlabel $z$ [bl] at 97.5 86
		\pinlabel $w$ [l] at 165 98
		\pinlabel $\Omega$ at 120 30
		\pinlabel $\gamma$ [r] at 80 60
	\endlabellist
	\centering
\includegraphics[width=0.8\textwidth]{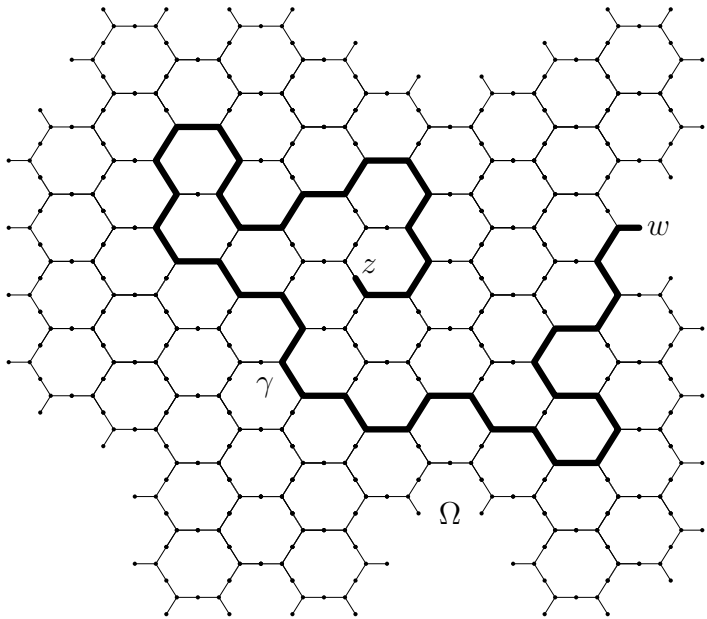}
\caption{An example of a discrete curve $\gamma$ from $w$ to $z$ in the domain $\Omega$ with all vertices and mid-edges illustrated. In this instance the turning number of the curve is $-7\pi/3$.}\label{disFig}
\end{figure} 

Fix mid-edges $z$ in $\Omega$ and $w$ on the boundary of $\Omega$.  Given a parameter $x$, we consider the measure on self avoiding curves starting at $w$ and ending at $z$ on the discretized version on $\Omega$ which weighs a curve $\gamma$ by the weight $x^{\ell(\gamma)}$ where $\ell(\gamma)$ is the number of steps in $\gamma$.  Thus,
\[
F(z) = \E[e^{-i\sigma \W_\gamma(w,z)}] = \sum_{\gamma \subset \Omega: w \rightarrow z} e^{-i\sigma \W_\gamma(w,z)}x^{\ell(\gamma)},
\]
where the expectation notation is being used even though the measure need not be a probability measure.

The key result of \cite{Connective}, for the study of SAW, is that for $x = 1/\sqrt{2+\sqrt{2}}$ and $\sigma = \frac{5}{8}$, $F(z)$ satisfies a relation which can be thought of as a (partial) discrete version of the Cauchy-Riemann equation.  Moreover, by considering limits of this observable in carefully chosen domains, one may derive that the connective constant is $\sqrt{2+\sqrt{2}}$.

There are two important observations to make about this definition.  First, the expectation needs to be taken weighted by the total mass of the curves under the discrete measure and should not be taken under the normalized measure.  Second, a key feature of this observable is the existence of a value of $\sigma$ which makes $F(z)$ holomorphic.  These observations help motivate our study of the $\SLE$ parafermionic observable.

\subsection{Lifting the turning number to the continuum}
In the above definition, we required the turning number of the discrete curve---a property only well-defined for piecewise $C^1$ curves (note that an arbitrary choice must be made at corners; in our case the correct choice is clear).  Indeed the turning number of $\SLE_\kappa$ is inherently ill-defined, and thus we must find an alternative quantity which survives in the continuum.

Examine the discrete curve in Figure \ref{disFig}.  We consider this curve parametrized continuously to be a curve $\gamma:[0,\infty] \rightarrow \C$ with $\gamma(0) = w$ and $\gamma(\infty) = z$.  By the construction of the boundary, the first step of the curve is taken normal to the boundary of the domain.  The last step is taken in the direction of $\arg(\gamma'(\infty))$, which is
\[
\lim_{t\rightarrow \infty} \arg(z-\gamma(t)).
\]
Thus, the turning number is 
\[
\lim_{t\rightarrow \infty} \arg(z-\gamma(t)) - \arg(\mathbf{n}_\Omega(w)) + 2\pi k
\]
for some integer $k$, where $\mathbf{n}_\Omega(w)$ is the inward facing normal.

\begin{figure}[htb]
	\labellist
		\pinlabel $z$ [bl] at 97.5 86
		\pinlabel $w$ [l] at 165 98
		\pinlabel $\Omega$ at 120 30
		\pinlabel $\gamma$ [r] at 80 60
	\endlabellist
	\centering
\includegraphics[width=0.8\textwidth]{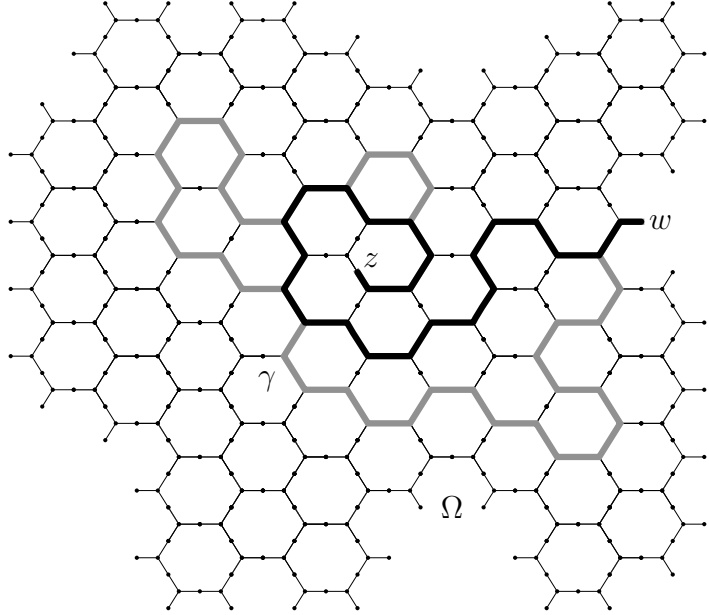}
\caption{The curve $\gamma$ of Figure \ref{disFig} is shown here in gray along with a discretely homotopy equivalent curve in black which illustrates that the winding number dictates the correct multiple of $2\pi$.}\label{disFig2}
\end{figure}

To identify the value of $k$, we exploit the regular homotopy invariance of turning number, which in the discrete setting simply means we may reroute the curve around hexagons as long as doing so does not create any self intersections or move either end.  By performing such operations, we may remove any winding which occurs around points other than $z$ without changing the turning number and hence reduce it to a curve as shown in Figure \ref{disFig2}.  In this case, the correct multiple of $2\pi$ is precisely the winding number of $\gamma$ around $z$. Taking the version of $\arg(z-\gamma(t))$ which is continuous in $t$, we thus have $k=0$ in the above formula.

There is a subtlety in the choice of $\arg(\gamma(0)-z)$ in the above definition.  In particular, care must be taken in situations as in Figure \ref{badDomainFig}, where the domain itself adds turning to the curve by winding around $w$.  As we work mainly in $\Disk$, the unit disk, this is not an issue.  

\begin{figure}[htb]
	\labellist
		\pinlabel $z$ [r] at 40 80
		\pinlabel $w$ [t] at 120 50
	\endlabellist
	\centering
\includegraphics[width=0.8\textwidth]{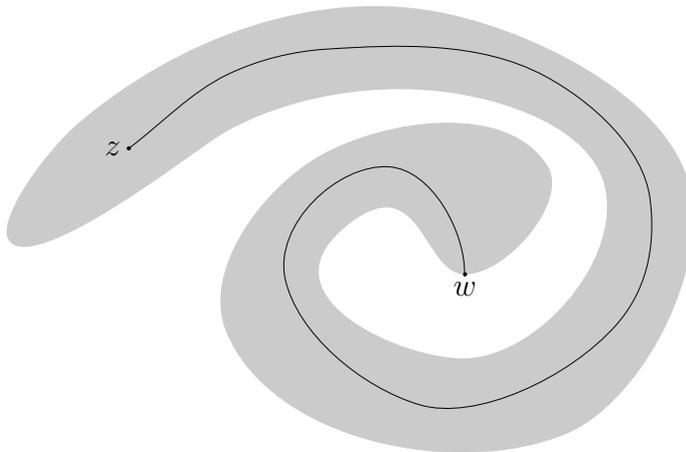}
\caption{A domain with non-trivial winding about $w$}\label{badDomainFig}
\end{figure}

It will be slightly more convenient to consider $\arg(\gamma(t)-z)$, which differs from the previous choice by $\pi$.  Thus, if we take $\mathbf{n}_\Omega(w)$ to be the outward facing normal, we find the turning number is
\[
\lim_{t\rightarrow \infty} \arg(\gamma(t)-z) - \arg(\mathbf{n}_\Omega(w)).
\]

There are two things to note here.  First, there is no ``first step'' of an $\SLE$ along the inwards facing normal. However, as this is the case in the discrete model, it makes sense to adopt this convention.  Second, as an $\SLE$ curve approaches the terminal point, the limit in the above formula will not exist---it needs to be properly normalized in the $t \rightarrow \infty$ limit.  With these points in mind, we make the following definition.

\begin{definition}
	Given an arbitrary non-crossing curve $\gamma:[0,\infty] \rightarrow D$ from $w \in \bd D$ to $z \in D$ where $\bd D$ is sufficiently smooth so that the outward facing normal $\mathbf{n}_D(w)$ at $w$ is well-defined, then 
	\[
	\W_{\gamma(0,t]}^z = \arg(\gamma(t)-z) - \arg(\mathbf{n}_D(w))
	\]
	where the version of the argument is chosen to be continuous in $t$ and to tend to the correct value as $t$ tends to zero.
\end{definition}

An important feature to note about using a normalized limit of $\W_{\gamma(0,t]}^z$ is that this quantity will transform the same as the discrete version under conformal transformations.  If one were to apply a conformal transformation to the discrete curve, the turning number is increased by the argument of the derivative at the terminal point and decreased by the argument of the derivative at the initial point.  This choice may seem odd from the point of view of conformal field theory since it implies that the observable will be holomorphic at the terminal point and anti-holomorphic at the initial point, however this choice is natural when one attempts to mirror the discrete definition.

\subsection{The continuum parafermionic observable}\label{contpara}

With the notion of turning number generalized, we define the parafermionic observable for $\SLE_\kappa$ by mirroring the discrete construction.  

Here, and in the rest of the paper, we will let $a = 2/\kappa$.  Consider a radial $\SLE_\kappa$ in $\Disk$ from $1$ to some point $z$, parametrized so that the conformal radius of $z$ in $D_t := \Disk \setminus \gamma(-s_0,t]$ is $e^{-2at}$ (the curve starts at some time $-s_0=-s_0(z)$ since the conformal radius of $z$ in $D$ is not always $1$; see Section \ref{defSec} for more detailed discussion).  This is (a linear function of) the standard radial parametrization as was used in Schramm's original work defining $\SLE$ \cite{first}.  Let $\W_{\gamma(-s_0,t]}^z$ be as defined in the previous section.  By analogy with the discrete case, we examine the expression
\[
\E^z[e^{-i\sigma W_{\gamma(-s_0,t]}^z}].
\]

This expectation differs from the discrete parafermionic observable in two important ways.  First, this observable does not take into account the entire curve $\gamma$ from $1$ to $z$, only the curve up to time $t$.  Second, the observable only considers the $\SLE$ probability measure, which is not the correct analogue of the discrete measures encountered in Section \ref{discrete}.

To avoid the first issue, we consider the limit as $t \rightarrow \infty$.  As $t$ grows, the typical winding number about $z$ grows as well, and hence the expectation shrinks towards zero.  In Section \ref{existSec}, we will see the form of normalization needed to avoid triviality of the observable is
\[
\lim_{t \rightarrow \infty} e^{2a\nu t}\;\E^z[e^{-i\sigma \W_{\gamma(-s_0,t]}^z}].
\]
for some explicit $\nu = \nu(\kappa,\sigma)$.

To avoid the second issue, we must consider not the probability measure given by $\SLE_\kappa$, but instead the finite measure given by weighting by the total mass of radial $\SLE_\kappa$ as used by Lawler (see, for example, \cite{parkcity}), which we denote by $C_\Disk(w,z)$ where $w\in\bd \Disk$ is the initial point of the $\SLE_\kappa$ and $z \in \Disk$ is the terminal point of the $\SLE_\kappa$. These weighted measures were defined with the intention of mirroring the properties expected for the total mass of the discrete measures (see \cite{parkcity} for a more detailed exposition).  A precise definition is given in Section \ref{totSec}.

With this final addition in place, we are ready to state our main object of study.
\begin{definition}
	The \emph{parafermionic observable for $\SLE_\kappa$} (in the unit disk from $1$ to $z$) is
	\[
	F(z) = \lim_{t \rightarrow \infty} C_\Disk(1,z)  \;e^{2a\nu t}\;\E^z[e^{-i\sigma \W_{\gamma(0,t]}^z}],
	\]
	where $\nu = \nu(\kappa, \sigma)$ is chosen so the limit exists.
\end{definition}

\subsection{Structure of the paper}

The paper is arranged as follows.  

In Section \ref{SLESec}, we provide a brief definition of $\SLE$ and the total mass for radial $\SLE$.  We then introduce a version of the reverse Loewner equation, our main tool for studying the continuum version of the parafermionic observable, which is particularly well suited to our task since the turning number is very easily expressed in terms of the flow.

In Section \ref{existSec}, we use this Loewner equation to show the $\SLE$ parafermionic observable exists and compute it up to a multiplicative constant (which at this stage of the paper may be zero, but is guaranteed finite).  In particular, we show in Theorem \ref{limitFunction} that if $\nu$ is taken to be $\kappa\sigma^2/2$, then the limit defining $F(z)$ exists, and
\[
F(z) = |1-z|^{-2(b-\sigma)}(1-|z|^2)^{b-\tilde b - \nu}(1-z)^{-2\sigma}F(0)
\]
where $b = \frac{6-\kappa}{2\kappa}$ is the boundary scaling exponent for $\SLE_\kappa$, and $\tilde b = (\frac{\kappa - 2}{4})b$ is the interior scaling exponent for $\SLE_\kappa$.  In the process of demonstrating this, we also produce a conformal covariance rule for $F(z)$, which may be found in Theorem \ref{covRule}.

In Section \ref{confSec} we show in Theorem \ref{holTheorem} that $F(z)$ is a holomorphic function of $z$ for precisely the choice $\sigma = b$, which agrees with the conjectured limits for the discrete examples discussed in \cite{buziosD,towards,discrete}.

Finally, in Section \ref{nonzeroSec}, we provide conditions on the non-triviality of the observable by exploiting a particularly convenient reparametrization of the reverse Loewner equation.  In Theorem \ref{nonzeroTheorem}, we provide a general condition to show non-triviality of $F(z)$ which holds even in the non-holomorphic case.  As we are most interested in the holomophic case, we then specialize it in Theorem \ref{confRange} to show the holomorphic observable is non-zero for $\kappa$ approximately in the range $(2.47\ldots,7.99\ldots)$, which is defined by a pair of roots to explicit transcendental equations.

\section{$\SLE$ preliminaries}\label{SLESec}

We review a few preliminaries about radial $\SLE_\kappa$.  We refer the reader to \cite{Lbook,parkcity,Werner} for more complete introductions to the field.

\subsection{Basic definitions}\label{defSec}

Fix a simply connected domain $D$, $w \in \bd D$, and $z \in D$.  We now describe radial $\SLE_\kappa$ from $w$ to $z$ in $D$.  

Radial $\SLE_\kappa$ is the unique family of probability measures, $\mu_D(w,z)$ on non-crossing curves from $w$ to $z$ contained in $D$, viewed moduluo reparametrization, which satisfy the following two properties:
\begin{description}
	\item[{\bf Conformal Invariance}] Given a conformal transformation (by which we mean a conformal map with a conformal inverse) $f:D \rightarrow f(D)$, we have $\mu_{f(D)}(f(w),f(z)) = f \circ \mu_D(w,z)$, where the right hand side is the push-forward of $\mu_D(w,z)$ along $f$.
	\item[{\bf Domain Markov}]  Given $\gamma_t = \gamma(0,t]$, an initial segment of $\gamma$, then conditional distribution of $\mu_D(w,z)$ on the remainder of the curve is $\mu_{D \setminus \gamma_t}(\gamma(t),z)$.
\end{description}

These two properties are obtained by examining discrete models, such as the self-avoiding walk.  The study of $\SLE$ was initiated by Oded Schramm in \cite{first}, in which he showed the above properties may be used to derive a much more technically convenient definition, which we now give.  Throughout we let $a := 2/\kappa$, and most equations are written in term of $a$ rather than $\kappa$.

By conformal invariance, it suffices to only define radial $\SLE_\kappa$ from $1$ to $0$ in $\Disk$.  Let $g_t$ be the solution to the initial value problem in $\Disk$ given by
\[
\bd_t g_t(z) = 2a g_t(z) \frac{e^{2iB_t}+g_t(z)}{e^{2iB_t}-g_t(z)}, \quad g_0(z) = z
\]
where $B_t$ is a standard Brownian motion, called the \emph{driving function}.  This is known as the \emph{radial Loewner equation}.  The map $g_t$ defines a conformal transformation from a simply connected domain $D_t$ to $\Disk$.  Moreover, it is shown in \cite{RS} (for $\kappa \ne 8$) and \cite{loop} (for $\kappa = 8$) that $D_t$ may be written as the connected component of $\Disk \setminus \gamma(0,t]$ containing zero with $\gamma(t)$ given by
\[
\gamma(t) = \lim_{\delta \downarrow 0} g_t^{-1}((1-\delta)e^{2iB_t}).
\] 
This curve is \emph{radial $\SLE_\kappa$ from $1$ to $0$ in $\Disk$}.

Given a simply connected domain $D$ and a point $z \in D$, the \emph{conformal radius} is a measure of distance from $z$ to the boundary defined as $1/|f'(z)|$ where $f$ is a conformal transformation of $D$ to $\Disk$ sending $z$ to $0$.  By differentiating in $z$, we see $\bd_t g_t'(0) = 2a g_t'(0)$ and hence $g_t'(0) = e^{2at}$ which is to say the conformal radius of $0$ in $D_t$ is $e^{-2at}$.

Thus, we have defined radial $\SLE_\kappa$ in $\Disk$ from $1$ to $0$ parametrized so the conformal radius of $0$ in $D_t$ is $e^{-2at}$.  We may use this to define the curve in other domains by conformal invariance, but due to our application, we need to be careful in our choice of parametrization.  We want that for any domain $D$, our $\SLE_\kappa$ curves from $w \in \bd D$ to $z \in D$ are parametrized so the conformal radius of $z$ in $D \setminus \gamma(-s_0,t]$ is $e^{-2at}$.   In particular if $f:\Disk \rightarrow D$ with $f(1) = w$ and $f(0) = z$, we reparametrize so that
\[
(f \circ \gamma)(t) = f(\gamma(s_t)) \mwhere s_t := t+\frac{1}{2a}\log|f'(0)|.
\]
This makes the conformal radius of $f(0)$ in $D\setminus(f\circ\gamma)(-s_0,t]$ equal to $e^{-2at}$.  This has the slight inconvenience that our $\SLE$ curves will often be parametrized to start at time $0$ but instead at time $-s_0 = -\frac{1}{2a}\log|f'(0)|$.

\subsection{Total mass of radial $\SLE$}\label{totSec}
As stated in Section \ref{contpara}, we are not primarily interested in $\SLE$ as a probability measure, as defined in the previous section, but instead as a finite measure weighted by what is known as the $\SLE$ total mass, which we denote by $C_D(w,z)$.  

Throughout, we let $b = (3a-1)/2$ denote the \emph{boundary scaling exponent}, and $\tilde b = \frac{1}{2}(\frac{1}{a} - 1)b$ denote the \emph{interior scaling exponent}.  Then, the \emph{total mass for radial $\SLE_\kappa$} is defined as follows. For convenience, fix $C_\Disk(1,0) = 1$. Then, using the covariance rule derived in \cite{parkcity}, we get for any conformal map $g:\Disk \rightarrow g(\Disk)$ with well-defined derivative at $w \in \bd\Disk$, we have
\[
C_{\Disk}(w,z) = |g'(w)|^b\cdot|g'(z)|^{\tilde b}\cdot C_{g(\Disk)}(g(w),g(z)).
\]

In this work, we are particularly interested in the case $C_\Disk(1,z)$.  Consider the conformal map $f_z : \Disk \rightarrow \Disk$ which is the unique conformal automorphism of $\Disk$ for which $f_z(z) = 0$ and $f_z(1) = 1$.  As this function is useful several times in our paper, we collect a few properties of it in a lemma.

\begin{lemma}\label{fzLemma}
	For $f_z: \Disk \rightarrow \Disk$ as defined above, we have the following statements:
	\begin{align*}
		f_z(w) &= \frac{1-\bar z}{1-z} \cdot \frac{w-z}{1-\bar z w}, \\
		f'_z(w) &= \frac{1-\bar z}{1-z} \cdot \frac{1-|z|^2}{(1-\bar z w)^2}, \\
		|f'_z(1)| &= \frac{1-|z|^2}{|1-z|^2}, \\
		|f'_z(z)| &= \frac{1}{1-|z|^2}, \\
		\arg(f'_z(1)) & = 0, \textrm{ and} \\
		\arg(f'_z(z)) &= \arg\Big(\frac{1-\bar z}{1-z}\Big) = -2\arg(1-z).
	\end{align*}
\end{lemma}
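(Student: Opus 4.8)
The plan is to start from the general form of a conformal automorphism of the disk and pin down its two free parameters using the two normalizations $f_z(z)=0$ and $f_z(1)=1$. Every conformal automorphism of $\Disk$ can be written as $w \mapsto e^{i\theta}\frac{w-a}{1-\bar a w}$ for some $a \in \Disk$ and $\theta \in \R$; the condition $f_z(z)=0$ forces $a = z$, and then $f_z(1)=1$ forces $e^{i\theta} = \frac{1-\bar z}{1-z}$. I would check that this last expression genuinely has modulus one, which follows from $|1-\bar z| = |1-z|$, so the resulting map is a bona fide automorphism of $\Disk$ rather than merely a fractional linear map. Substituting then yields the first displayed formula immediately.

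For the derivative, I would differentiate the Blaschke factor $\frac{w-z}{1-\bar z w}$ by the quotient rule; the numerator collapses to $1 - |z|^2$, giving $f_z'(w) = \frac{1-\bar z}{1-z}\cdot\frac{1-|z|^2}{(1-\bar z w)^2}$, the second formula. The remaining four statements follow by evaluating this at $w=1$ and $w=z$ and reading off moduli and arguments. At $w=1$ the factor $(1-\bar z)$ in the numerator cancels one power of $(1-\bar z)^2$ in the denominator, leaving the manifestly real and positive quantity $\frac{1-|z|^2}{|1-z|^2}$; this simultaneously delivers $|f_z'(1)|$ and $\arg(f_z'(1)) = 0$. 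At $w=z$ the denominator becomes $(1-|z|^2)^2$, one power of which cancels against the numerator, leaving $f_z'(z) = \frac{1-\bar z}{1-z}\cdot\frac{1}{1-|z|^2}$; since the unimodular prefactor does not affect the modulus, $|f_z'(z)| = \frac{1}{1-|z|^2}$.

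The only step beyond bookkeeping is the final argument computation. Writing $1 - \bar z = \overline{1-z}$, I would apply the general identity $\arg(\bar\zeta/\zeta) = -2\arg\zeta$ (valid for nonzero $\zeta$, since $\bar\zeta/\zeta = e^{-2i\arg\zeta}$) with $\zeta = 1-z$ to conclude $\arg(f_z'(z)) = \arg\big(\frac{1-\bar z}{1-z}\big) = -2\arg(1-z)$. I do not expect a genuine obstacle: the content is an entirely routine verification, and the only real care needed is to track the unimodular phase $\frac{1-\bar z}{1-z}$ consistently through all six formulas so that the arguments emerge with the correct sign.
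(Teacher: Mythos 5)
Your proposal is correct and follows exactly the paper's route: the paper likewise cites the classification of disk automorphisms $w \mapsto e^{i\theta}\frac{w-z}{1-\bar z w}$ (attributed to \cite{stein}) to fix the form of $f_z$, and then dismisses the six formulas as direct computation. You have simply written out the computations the paper omits, and all of them check out, including the sign in $\arg(f_z'(z)) = -2\arg(1-z)$.
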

\begin{proof}
	The particular form of $f_z$ follows since every conformal automorphism of the disk, $f$, is of the form
	\[
	f(w) = e^{i\theta}\frac{w-z}{1-\bar z w}
	\]
	for some $z \in \Disk$ and $\theta \in \R$ (see, for example, \cite{stein}).  With the explicit form available, the rest is computation.
\end{proof}

By applying the conformal covariance rule for the total mass, we see
\[
C_\Disk(1,z) = |f_z'(1)|^b\cdot|f_z'(z)|^{\tilde b}\cdot C_{\Disk}(1,0) = |1-z|^{-2b}(1-|z|^2)^{b-\tilde b}.
\]

\subsection{Alternate forms of the Loewner equation and the turning number}\label{altSec}

To prove our theorem, we work with a lifted form of the radial Loewner equation and a corresponding reversed equation.  Let $g_t$ be the solution to the initial value problem
\[
\bd_t g_t(z) = 2a g_t(z) \frac{e^{2iB_t}+g_t(z)}{e^{2iB_t}-g_t(z)}, \quad g_0(z) = z
\]
as in the previous section.

We lift this equation to the upper half plane via the same method as \cite{Lnew}.  Define a function $h_t(z)$ by requiring
\[
g_t(e^{2iz}) = e^{2ih_t(z)}.
\]
This does not define $h_t(z)$ uniquely, but by translating the Loewner equation to an equation on $h_t(z)$, we have
\[
\bd_t h_t(z) = a \; \cot(h_t(z) - B_t)
\]
as long as we choose $h_t(z)$ to be continuous in $t$.  We can make $h_t(z)$ uniquely defined by choosing $h_0(z) = z$ as the initial conditions.

Consider $h_t^{-1}$.  By the definition of the curve generated by the Loewner chain, we see that
\[
\gamma(t) = \lim_{\delta \downarrow 0} g_t^{-1}((1-\delta)e^{2iB_t}) = \lim_{\delta \downarrow 0} e^{2ih_t^{-1}(B_t+i\delta)}
\]
where the limit exists for all $t$ with probability one.  Thus if one is interested in the winding number, we need to consider
\[
\W_{\gamma(0,t]}^0 = \arg(\gamma(t)) - \arg(\mathbf{n}_\Disk(1)) = \lim_{\delta \downarrow 0} 2\Re(h_t^{-1}(B_t+i\delta))
\]
where each step must interpreted as the version which is continuous in $t$.  As the right hand side is already continuous in $t$ with the correct value of $0$ at $t = 0$, we need not worry further.

When one wishes to study the behavior of the inverse of a Loewner map, it has proven to be useful to consider the reverse Loewner flow (see, for example, \cite{RS,multi}).  In our setting, this is a process $Z_t$ satisfying
\[
\rd Z_t(z) = -a\cot(Z_t(z))\dt + \rd B_t, \quad Z_0(z) = z.
\]
This is related to $h_t^{-1}$ as follows.

\begin{lemma}\label{distLem}
For a fixed $t > 0$, the random conformal maps $h_t^{-1}(B_t + \cdot)$ and $Z_t(\cdot)$ are equal in law.
\end{lemma}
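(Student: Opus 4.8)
The plan is to exploit the standard time-reversal symmetry of the Loewner flow, adapted to the lifted coordinate $h_t$. The key observation is that the lifted equation $\bd_t h_t(z) = a\cot(h_t(z)-B_t)$ is driven by a Brownian motion, so its inverse flow should, after reversing time, satisfy an autonomous SDE of the same form. Concretely, I would first fix the terminal time $t$ and introduce the time-reversed driving function $\tilde B_s := B_{t-s} - B_t$ for $s \in [0,t]$; by the stationarity and time-reversibility of Brownian motion, $(\tilde B_s)_{0 \le s \le t}$ is again a standard Brownian motion started at $0$, and this is the sole probabilistic input to the argument.

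The main computation is deterministic: for a fixed continuous driving path, I would show that the inverse map $h_t^{-1}$ can be realized by running a conjugated flow backwards in time. Define $\phi_s := h_t \circ h_{t-s}^{-1}$ (or the analogous composition adapted to the lifted setting), which at $s=0$ is the identity and at $s=t$ equals $h_t$. Differentiating the flow relation $h_s(h_s^{-1}(\zeta)) = \zeta$ in $s$ and substituting the Loewner ODE gives, after recentering by the driving term, an equation of the form $\bd_s \psi_s(z) = -a\cot(\psi_s(z)) + \bd_s \tilde B_s$ for the appropriately centered and reversed family $\psi_s$. Matching this against the defining SDE
\[
\rd Z_s(z) = -a\cot(Z_s(z))\ds + \rd B_s, \quad Z_0(z) = z,
\]
with $\tilde B$ in place of $B$, identifies $\psi_t$ with $Z_t$ as functions of the driving path. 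Here I must be careful that the reverse flow is genuinely a forward-in-time autonomous SDE — the sign flip on the drift and the replacement of $B$ by $\tilde B$ are precisely what make the backward composition into a forward diffusion of the stated form.

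The final step is to assemble these pieces. Since $\tilde B \stackrel{\Di}{=} B$ as processes on $[0,t]$, the map $Z_t$ built from $\tilde B$ has the same law as the map $Z_t$ built from $B$, which is exactly the process in the lemma statement. The centering by $B_t$ is what converts $h_t^{-1}$ into $h_t^{-1}(B_t + \cdot)$: unwinding the recentering shows $\psi_t(\cdot) = h_t^{-1}(B_t + \cdot)$ as a random conformal map, while the deterministic identity shows $\psi_t = Z_t$ pathwise in $\tilde B$. Combining gives $h_t^{-1}(B_t + \cdot) \stackrel{\Di}{=} Z_t(\cdot)$, as claimed. I expect the main obstacle to be bookkeeping the recentering and the branch of $\cot$ carefully enough that the deterministic flow identity holds as an equality of conformal maps (not merely up to additive multiples of $\pi$), and verifying that the reversed flow indeed exists and remains conformal up to time $t$; the probabilistic content, by contrast, reduces entirely to the reversibility of Brownian motion.
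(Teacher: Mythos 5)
Your proposal is correct and is precisely the argument the paper has in mind: the paper omits the proof of Lemma \ref{distLem}, citing the identical chordal statement, and that proof is exactly your time-reversal of the driving Brownian motion combined with the deterministic backward-flow identity. One small correction to your sketch: the composition you want is $u_s := h_{t-s}\circ h_t^{-1}$ (which runs from the identity at $s=0$ to $h_t^{-1}$ at $s=t$, and whose $s$-derivative is computed directly from the Loewner ODE for the outer map), not $\phi_s = h_t\circ h_{t-s}^{-1}$ (which ends at $h_t$); with $\psi_s(w) := u_s(w+B_t) - B_{t-s}$ one gets $\psi_0 = \mathrm{id}$, $\rd\psi_s(w) = -a\cot(\psi_s(w))\ds - \rd\tilde B_s$, and $\psi_t(\cdot) = h_t^{-1}(B_t+\cdot)$, exactly as you describe.
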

The proof of this lemma is identical to the equivalent statement for the chordal case, which may be found, for example, in \cite[Lemma 5.2]{multi}, and is thus omitted.  It is important to note this only holds for any \emph{fixed} $t>0$ and not on the process level.  

\begin{figure}[htb]
	\labellist
	\small
	\pinlabel $i\delta$ [b] at 45 15

    \pinlabel $Z_t$ [b] at 100 75

	\pinlabel $\Theta_t(i\delta)$ [b] at 130 70

	\tiny
    \pinlabel $-\pi$ at 15 5
	\pinlabel $0$ at 45 5
	\pinlabel $\pi$ at 75 5
	
	\pinlabel $-\pi$ at 125 5
	\pinlabel $0$ at 155 5
	\pinlabel $\pi$ at 185 5
	\endlabellist
	\centering
\includegraphics[width=0.8\textwidth]{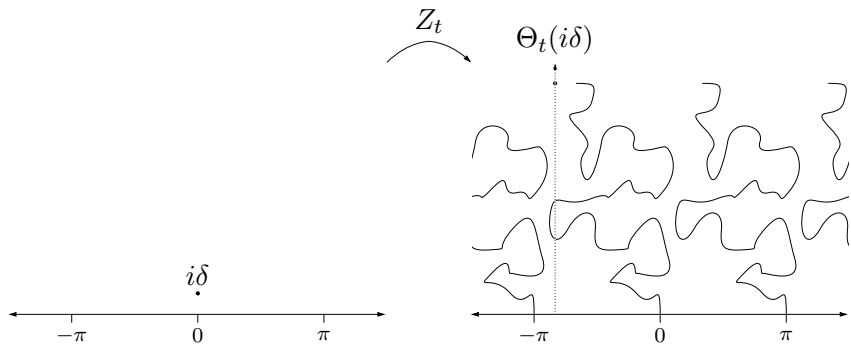}
\caption{A diagram illustrating the lifted reverse radial Loewner equation $Z_t$.  This is a conformal map from $\Half$ to a subdomain of $\Half$ slit by repeated translates of curves growing out of $0$ (and its translates).  While convenient for comparing with the turning number, this makes the growth of $\Theta_t$ erratic.}\label{ThetaGrowFig}
\end{figure}

For simplicity of notation, let us write $Z_t(z) = \Theta_t(z) + i R_t(z)$.  Thus we have that, in law,
\[
\W_{\gamma(0,t]}^0 = \lim_{\delta \downarrow 0} 2\Theta_t(i\delta).
\]
In this setup, to analyze the turning number we need only analyze the real part of the reverse flow.

By the definition of $\cot(z)$ we have
\[
\cot(x+iy) = i \frac{e^{i(x+iy)} + e^{-i(x+iy)}}{e^{i(x+iy)} - e^{-i(x+iy)}} = \frac{\sin(2x)}{\cosh(2y)-\cos(2x)} - i\frac{\sinh(2y)}{\cosh(2y)-\cos(2x)}
\]
and thus by applying It\^o's formula
\begin{align*}
\rd \Theta_t & = -a \frac{\sin(2\Theta_t)}{\cosh(2R_t)-\cos(2\Theta_t)} \dt + \rd B_t, \\
\rd R_t & = a \frac{\sinh(2R_t)}{\cosh(2R_t)-\cos(2\Theta_t)} \dt.
\end{align*}

We now split $\Theta_t$ into two parts by defining $T_t$ so that $\Theta_t = T_t + B_t$.  This choice is motivated by the following geometric interpretation of the reverse flow, which is fundamental to the intuition behind the work that follows.

Consider the growth of $\Theta_t$.  This should be thought of as the total motion of the lifted $\SLE_\kappa$ curve in the real direction, which, as seen above, corresponds to the turning number.  Its value should become large, and thus its variance hard to control.  Figure \ref{ThetaGrowFig} illustrates the motion of $\Theta_t$.  

However, now consider $T_t = \Theta_t - B_t$.  This translation has the effect of growing the curves from translates of $B_t$ rather than from translates of $0$.  In this case, the majority of the change in argument has been shifted to change in location of the base of growth of the curve, whereas the tip, which is traced by $T_t$, grows mostly vertically.  Indeed, by our definition, we have that
\[
\rd T_t = -a \frac{\sin(2\Theta_t)}{\cosh(2R_t)-\cos(2\Theta_t)} \dt.
\]
which implies the rate of displacement of $T_t$ decays exponentially in $R_t$.  The exponential decay hints at what we prove rigorously in the next section: $T_t$ has a limit as $t\rightarrow \infty$.    Figure \ref{TGrowFig} illustrates the motion of $T_t$.

\begin{figure}[htb]
	\small
	\labellist
	\pinlabel $i\delta$ [b] at 45 15

    \pinlabel $Z_t-B_t$ [b] at 100 75

	\pinlabel $T_t(i\delta)$ [b] at 155 70
	\pinlabel $B_t$ at 180 5

	\tiny
    \pinlabel $-\pi$ at 15 5
	\pinlabel $0$ at 45 5
	\pinlabel $\pi$ at 75 5
	
	\pinlabel $-\pi$ at 125 5
	\pinlabel $0$ at 155 5
	\pinlabel $\pi$ at 185 5
	\endlabellist
	\centering
\includegraphics[width=0.8\textwidth]{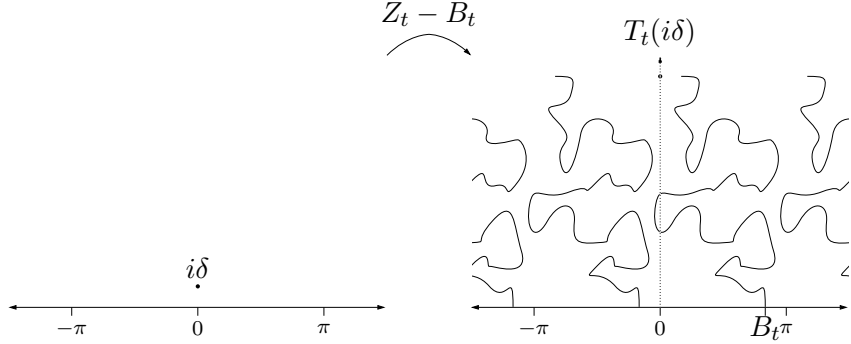}
\caption{This illustrates the growth of the process $T_t$.  As opposed to Figure \ref{ThetaGrowFig}, the new segments of the curve are being added at shifting locations along $\R$.  This has the effect of removing motion of the tip caused by $B_t$, making the tip travel essentially vertically (the effect is exaggerated in this diagram).}\label{TGrowFig} 
\end{figure}

Thus, by splitting $\Theta_t$ into $T_t+B_t$ we have translated understanding the turning number into understanding the growth of a Brownian term and the growth of a term which has a $t\rightarrow \infty$ limit.

\section{The existence of the $\SLE$ parafermionic observable}\label{existSec}
\subsection{The existence of the limit}

We wish to show 
\[
\frac{F(z)}{C_\Disk(1,z)} = \lim_{t \rightarrow \infty} e^{2a\nu t}\;\E^z[e^{-i\sigma \W_{\gamma(0,t]}^z}]
\]
exists when $\nu = \sigma^2/a$.  We do so by first showing it exists when $z = 0$, and then deriving a conformal covariance rule to see the limit exists for all $z$, and moreover computing it as a function of $z$.

Fixing $z = 0$ (and, for simplicity, suppressing it in our notation), we may use the reverse flow facts from above.  Thus we need to show
\[
\lim_{t \rightarrow \infty} e^{2a\nu t}\;\E[e^{-i\sigma \W_{\gamma(0,t]}}] = \lim_{t \rightarrow \infty} \lim_{\delta \downarrow 0} e^{2a\nu t}\;\E[e^{-2i\sigma T_t(i\delta)}e^{-2i\sigma B_t}]
\]
exists (where the location of the $\delta \rightarrow 0$ limit is justified by the dominated convergence theorem).  As a first step we have the following.

\begin{proposition}
	We have
	\[
	\lim_{t \rightarrow \infty} e^{2a\nu t}\;\E[e^{-i\sigma \W_{\gamma(0,t]}}] = \lim_{t \rightarrow \infty} \lim_{\delta \downarrow 0} \E[e^{-2i\sigma T_t(i\delta)}].
	\]
\end{proposition}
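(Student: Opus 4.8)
The plan is to recognize the pair (normalization factor, Brownian factor) as an exponential martingale and strip it off using the martingale property. With $\nu=\sigma^2/a$ we have $2a\nu=2\sigma^2$, so
\[
e^{2a\nu t}e^{-2i\sigma B_t}=e^{2\sigma^2 t-2i\sigma B_t}=:M_t ,
\]
and since $\E[e^{-2i\sigma B_t}]=e^{-2\sigma^2 t}$ (equivalently, by It\^o), $M_t$ is a complex exponential martingale with $M_0=1$ and $\E[M_t]=1$. Hence the left-hand side of the display preceding the proposition is exactly $\lim_{t\to\infty}\lim_{\delta\downarrow0}\E[e^{-2i\sigma T_t(i\delta)}M_t]$, and the proposition becomes equivalent to
\[
\lim_{t\to\infty}\lim_{\delta\downarrow0}\E\big[e^{-2i\sigma T_t(i\delta)}(M_t-1)\big]=0 .
\]

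The mechanism I would exploit is that $T_t$ freezes while $M_t$ keeps fluctuating. From $\rd T_t=-a\frac{\sin(2\Theta_t)}{\cosh(2R_t)-\cos(2\Theta_t)}\dt$ the drift is bounded in modulus by $a/(\cosh(2R_t)-1)$; since $\rd R_t\ge0$ and $\rd R_t\approx a\dt$ once $R_t$ is large, one gets $|T_t-T_s|\le\int_s^\infty a/(\cosh(2R_u)-1)\dd u$, which is exponentially small in $R_s$ uniformly in $t\ge s$ (this is the source of the convergence $T_t\to T_\infty$ established in the next section). The algebraic input is that for $s<t$, because $T_s$ is $\F_s$-measurable and $\E[M_t\mid\F_s]=M_s$, the frozen part decouples: $\E[e^{-2i\sigma T_s}M_t]=\E[e^{-2i\sigma T_s}M_s]$. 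Writing $e^{-2i\sigma T_t}=e^{-2i\sigma T_s}+(e^{-2i\sigma T_t}-e^{-2i\sigma T_s})$ then reduces everything to the single increment term $\E[(e^{-2i\sigma T_t}-e^{-2i\sigma T_s})M_t]$.

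The hard part is precisely this increment term, and the obstacle is that $|M_t|=e^{2\sigma^2 t}$ is deterministic and unbounded: the martingale has no modulus fluctuation to exploit, all of its cancellation sits in the phase, so one cannot estimate by moving absolute values inside and must keep the conditional-mean structure. I would factor $M_t=M_s\,\xi_{s,t}$ with $\xi_{s,t}:=e^{2\sigma^2(t-s)-2i\sigma(B_t-B_s)}$ independent of $\F_s$ and $\E[\xi_{s,t}\mid\F_s]=1$, condition on $\F_s$, and use the mean-zero split
\[
\E\big[(e^{-2i\sigma(T_t-T_s)}-1)\xi_{s,t}\,\big|\,\F_s\big]=\E\big[(e^{-2i\sigma(T_t-T_s)}-1)(\xi_{s,t}-1)\,\big|\,\F_s\big]+\E\big[e^{-2i\sigma(T_t-T_s)}-1\,\big|\,\F_s\big].
\]
The second term is controlled purely by the exponential smallness of $T_t-T_s$, with no surviving factor of $M$. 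The first is a covariance between the exponentially small $T$-increment and the mean-zero $\xi_{s,t}-1$; representing $T_t-T_s=\int_s^t(\text{drift})_u\dd u$ and $\xi_{s,t}-1=-2i\sigma\int_s^t (M_u/M_s)\rd B_u$, an It\^o-isometry computation turns it into $\int_s^t\E[(M_u/M_s-1)(\text{drift})_u\mid\F_s]\dd u$, in which the growth $|M_u/M_s|=e^{2\sigma^2(u-s)}$ must be matched against the decay $|(\text{drift})_u|\lesssim e^{-2R_u}$ as $R_u$ increases. The crux of the whole argument is uniform control of this balance, together with the contribution of the small-probability event on which $R_s$ has not yet grown (where the drift bound is weak), which must be absorbed against the deterministic factor $e^{2\sigma^2 s}$ before letting $s\to\infty$. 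Equivalently one may absorb $M_t$ by a Cameron--Martin shift of the driving Brownian motion by the imaginary drift $-2i\sigma$, but the shifted reverse flow is then complex-valued and the same limiting estimate reappears.
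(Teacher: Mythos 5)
Your proposal correctly isolates where the probabilistic content lies, but it is a plan rather than a proof: the step you yourself label ``the crux'' \emph{is} the proposition, and it is left unproven. Moreover, the balance you propose provably cannot close in the generality claimed. On the good event where $R_s$ has reached $\log\sqrt{3}$, the paper's own estimates (Proposition \ref{limexistProp}) give $R_u \ge R_s + \frac{a}{2}(u-s)$, so your It\^o-isometry term is bounded only by
\[
\int_s^t e^{2\sigma^2(u-s)}\,\E\big[\,|(\text{drift})_u|\;\big|\;\F_s\big]\dd u \;\lesssim\; e^{-2R_s}\int_0^{t-s} e^{(2\sigma^2-a)v}\dd v,
\]
which diverges as $t\to\infty$ whenever $2\sigma^2 \ge a$ (even using the sharper asymptotic growth rate $a$ for $R$, divergence sets in at $\sigma^2 \ge a$). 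Since the proposition, with $\nu = \sigma^2/a$, is asserted for \emph{every} $\sigma$, your single-step matching of $e^{2\sigma^2(u-s)}$ against $e^{-2R_u}$ fails outright for large spin, and some further idea (iterating the mean-zero split, genuinely controlling the complex-tilted flow, or a spectral argument) would be required. The bad event is also not dispatched: after $\delta \downarrow 0$ the deterministic bounds of Lemma \ref{notBadTimeLem} degenerate, so absorbing the factor $e^{2\sigma^2 s}$ there requires a quantitative estimate of the form $e^{2\sigma^2 s}\,\E[|\gamma(s)|] \to 0$ (recall $e^{-2R_s(0)} = |\gamma(s)|$ in law), a nontrivial SLE moment bound that you neither prove nor cite, and which again cannot hold for all $\sigma$.

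For comparison, the paper's proof has none of this structure: it never uses the freezing of $T_t$, but instead writes the cross term as $\Cov(e^{-2i\sigma T_t(i\delta)}, e^{-2i\sigma B_t})$ and bounds it in one stroke by $(\E[e^{-4i\sigma B_t}])^{1/2} = e^{-4\sigma^2 t}$, uniformly in $\delta$, so that the normalization $e^{2a\nu t} = e^{2\sigma^2 t}$ leaves an error $O(e^{-2\sigma^2 t})$. You should note, however, that your ``obstacle'' paragraph is in direct tension with that step: for complex variables Cauchy--Schwarz involves conjugation, $\E[|e^{-2i\sigma B_t} - \E e^{-2i\sigma B_t}|^2] = 1 - e^{-4\sigma^2 t}$ is of order one, and the asserted bound cannot hold for an arbitrary modulus-one factor in place of $e^{-2i\sigma T_t(i\delta)}$; taking $X = e^{2i\sigma B_t}$, which also has variance at most one, gives $\Cov(X, e^{-2i\sigma B_t}) = 1 - e^{-4\sigma^2 t}$. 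So exponential decay of this covariance must use some property of $T_t$---precisely the decoupling you set out to quantify. In short: your diagnosis of where the difficulty lies is sound, and arguably sharper than the paper's two-line treatment, but the proposal neither follows the paper's route nor closes its own, so it does not constitute a proof of the proposition.
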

\begin{proof} 
	The case $\sigma = 0$ is trivial, and so we need only provide a proof in the case $\sigma \neq 0$.
	By the Cauchy-Schwartz inequality, we have
	\begin{align*}
	\big|\E[e^{-2i\sigma T_t(i\delta)}e^{-2i\sigma B_t}] - \E[e^{-2i\sigma T_t(i\delta)}]\E[e^{-2i\sigma B_t}]\big| & = |\Cov(e^{-2i\sigma T_t(i\delta)},e^{-2i\sigma B_t})| \\
	& \le \left(\E[e^{-4i\sigma B_t}]\right)^{1/2} \\
	& = e^{-4\sigma^2t}
	\end{align*}
	where the second line follows since the variance of $e^{-2i\sigma T_t(i\delta)}$ is no greater than one, and the third line follows by the calculation of the characteristic function of a normal random variable.  Written another way, this states
	\begin{align*}
	\E[e^{-2i\sigma T_t(i\delta)}e^{-2i\sigma B_t}] & = \E[e^{-2i\sigma T_t(i\delta)}]\E[e^{-2i\sigma B_t}] + O(e^{-4\sigma^2t}) \\
	& = e^{-2\sigma^2t}\E[e^{-2i\sigma T_t(i\delta)}] + O(e^{-4\sigma^2t})
	\end{align*} as $t \rightarrow \infty$ where the implicit constant is one, and hence does not depend on $\delta$.
	
	Thus, we have
	\begin{align*}
	e^{2a\nu t}\E[e^{-2i\sigma T_t(i\delta)}e^{-2i\sigma B_t}] & = e^{2a\nu t-2\sigma^2t}\E[e^{-2i\sigma T_t(i\delta)}] + O(e^{2a\nu t-4\sigma^2t}) \\
	& = \E[e^{-2i\sigma T_t(i\delta)}] + O(e^{-2\sigma^2t})
	\end{align*}
	by the definition of $\nu$, where the implicit constant is again one, and hence independent of $\delta$.  As $\sigma^2 > 0$, this gives the desired result.
\end{proof}

Thus, we have reduced the existence of the limit to controlling $T_t(i\delta)$.  We will show
\[
T_\infty := \lim_{t \rightarrow \infty} \lim_{\delta \downarrow 0} T_t(i\delta)
\]
exists with probability one.  This suffices, by the dominated convergence theorem, to show the limit defining $F(0)$ exists.

Consider the process
\[
T_t(0) := \lim_{\delta \downarrow 0} T_t(i\delta).
\]
As mentioned in Section \ref{defSec}, for a fixed time $t \ge 0$, this limit exists due to the existence of the $\SLE_\kappa$ curve.  To study $T_t(0)$ (and related reverse flow processes), we use the following lemma, stating that the limit versions as above all satisfy their equivalent SDEs and ODEs.

\begin{lemma}
	The processes $R_t(0)$, $T_t(0)$, $\Theta_t(0)$, and $Z_t(0)$ all satisfy the ODEs and SDEs satisfied by $R_t(i\delta)$, $T_t(i\delta)$, $\Theta_t(i\delta)$, and $Z_t(i\delta)$ respectively for all $t > 0$ with probability one.
\end{lemma}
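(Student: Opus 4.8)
The plan is to reduce the whole statement to the single complex process $Z_t(0)$, since $\Theta_t(0) = \Re Z_t(0)$, $R_t(0) = \Im Z_t(0)$, and $T_t(0) = \Theta_t(0) - B_t$. Once $Z_t(0)$ is shown to satisfy $\rd Z_t = -a\cot(Z_t)\dt + \rd B_t$ for all $t>0$, applying It\^o's formula to the real and imaginary parts and subtracting $B_t$ recovers the stated equations for $\Theta_t(0)$, $R_t(0)$, and $T_t(0)$ at no extra cost. The genuine difficulty is that $\cot$ is singular on $\R$ while $Z_0(0)=0$ sits exactly at a singularity, so the SDE is not well posed from the initial point; what rescues us is that for each fixed $t>0$ the limit $Z_t(0)$ lies in the open upper half plane, away from the singular set.

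First I would fix a time $s_0>0$ and record that $R_{s_0}(0)>0$ with probability one: by Lemma \ref{distLem} the random variable $Z_{s_0}(0)$ has the law of $h_{s_0}^{-1}(B_{s_0})$, whose imaginary part is almost surely strictly positive because the tip $e^{2ih_{s_0}^{-1}(B_{s_0})} = \gamma(s_0)$ lies in the open disk $\Disk$ at the fixed time $s_0$. I would then exploit that $R_t$ is non-decreasing, which is immediate from $\rd R_t = a\frac{\sinh(2R_t)}{\cosh(2R_t)-\cos(2\Theta_t)}\dt \ge 0$, so that any reverse-flow trajectory entering $\{\Im z \ge c\}$ remains there for all later times. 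On this half plane $\cot$ is bounded and globally Lipschitz, since $|\sin z|^2 = \sin^2(\Re z) + \sinh^2(\Im z) \ge \sinh^2(c) > 0$ forces $|\cot'(z)| = |\sin z|^{-2} \le \sinh^{-2}(c)$. By pathwise uniqueness for the SDE restricted to this region, the trajectory $t \mapsto Z_t(i\delta)$ for $t \ge s_0$ coincides with the reverse-flow solution started at time $s_0$ from the point $Z_{s_0}(i\delta)$.

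Next I would invoke continuous dependence on the initial condition, which is particularly clean here because the \emph{same} Brownian motion drives every initial point: the difference of two solutions solves a pure ODE, the $\rd B_t$ terms cancelling, so Gronwall's inequality gives, for the solution $\hat Z_t$ started at time $s_0$ from $Z_{s_0}(0)$,
\[
|Z_t(i\delta) - \hat Z_t| \le |Z_{s_0}(i\delta) - Z_{s_0}(0)|\, e^{aL(t-s_0)}, \qquad L := \sinh^{-2}(c),
\]
valid on each interval $[s_0,T]$ once $\delta$ is small enough that $R_{s_0}(i\delta) \ge c$; with the choice $c := R_{s_0}(0)/2$ this holds for all small $\delta$ by the convergence $R_{s_0}(i\delta) \to R_{s_0}(0)$ together with the monotonicity of $R_t$, and $\hat Z_t$ itself stays in $\{\Im z \ge c\}$ since $\hat R_{s_0} = R_{s_0}(0) = 2c$ is increasing. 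Letting $\delta \downarrow 0$ yields $Z_t(i\delta) \to \hat Z_t$ locally uniformly on $[s_0,\infty)$, and comparing with the fixed-time convergence $Z_t(i\delta) \to Z_t(0)$ identifies $Z_t(0) = \hat Z_t$, which by construction solves the reverse-flow SDE on $[s_0,\infty)$.

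Finally I would remove the dependence on $s_0$ by running the argument along the sequence $s_0 = 1/n$: each value produces a probability-one event on which $Z_t(0)$ solves the SDE on $[1/n,\infty)$, and intersecting these countably many events gives a single probability-one event on which the equation holds on $\bigcup_n [1/n,\infty) = (0,\infty)$. I expect the main obstacle to be exactly the bookkeeping near $t=0$: guaranteeing that the comparison strip $\{\Im z \ge c\}$ can be chosen uniformly in $\delta$ and that all the trajectories $Z_t(i\delta)$ for small $\delta$ and $t \ge s_0$ genuinely remain inside it. This is handled by the monotonicity of $R_t$ combined with $R_{s_0}(i\delta) \to R_{s_0}(0) > 0$, which confines every relevant trajectory to the region where $\cot$ is Lipschitz and thereby legitimizes passing the limit $\delta \downarrow 0$ through the integrated SDE.
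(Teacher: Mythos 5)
Your proposal is correct, but it proves the lemma by a genuinely different route from the paper. The paper works at a single fixed time: having noted that $R_t(0)>0$ almost surely, it observes that the vector field $-a\cot(\cdot)$ is bounded and continuous in a neighborhood of $Z_t(0)$, concludes that the time derivatives of $T_t(i\delta)$ and $R_t(i\delta)$ converge to the expected expressions evaluated at the limit point, and then upgrades from fixed times to all $t>0$ by running over rational times and invoking continuity of the derivatives. You instead run forward in time from a fixed $s_0>0$: you confine all trajectories $Z_t(i\delta)$, $t\ge s_0$, to a half plane $\{\Im z\ge c\}$ using the monotonicity of $R_t$, exploit that $\cot$ is Lipschitz there (your computation $|\sin z|^2=\sin^2(\Re z)+\sinh^2(\Im z)\ge\sinh^2(c)$ is right, and the region is convex so the derivative bound does give a Lipschitz bound), and use the cancellation of the common Brownian driver plus Gronwall to get locally uniform convergence of $Z_\cdot(i\delta)$ to the unique strong solution started from $Z_{s_0}(0)$; a countable intersection over $s_0=1/n$ finishes. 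This buys something the paper's terser argument leaves implicit: you obtain convergence simultaneously for all $t\ge s_0$ and identify the limit outright as a solution of the SDE, so the interchange of the limit in $\delta$ with the derivative in $t$ never has to be addressed as a separate step. The cost is the extra stability machinery; note also that your confinement trick (monotonicity of $R_t$ keeping trajectories where the field is regular) is the same device the paper saves for the proof of Proposition \ref{limexistProp}.

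One step you assert without justification is the foundation of the whole argument: that $\gamma(s_0)$ lies in the \emph{open} disk almost surely at the fixed time $s_0$, equivalently $R_{s_0}(0)>0$ a.s. This is immediate for $\kappa\le 4$, where the curve never touches $\bd\Disk$, but for $\kappa>4$ the curve does hit the boundary, and one must argue that it is almost surely not on the boundary at a fixed deterministic time; the paper does this by deducing the statement from the chordal case by scaling, together with absolute continuity of radial with respect to chordal $\SLE$. Citing or reproducing that argument would close the only real gap in your write-up.
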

\begin{proof}
Fix an arbitrary $t > 0$.  First note $R_t(0) > 0$ with probability one since in distribution $R_t(0) = \log|\gamma(t)|$ and $\gamma(t) \in \Disk$ with probability one.  This is immediate for $\kappa \le 4$ as such $\SLE_\kappa$ curves never hit the boundary with probability one \cite[Theorem 6.1]{RS}.  For $\kappa > 4$, one may see this by observing that the equivalent statement holds for chordal $\SLE$ by scaling and then using absolute continuity of radial $\SLE$ with chordal $\SLE$ (see, for example, \cite{parkcity}).  This implies 
\[
\rd(T_t(0)+iR_t(0)) = \rd (Z_t(0)-B_t) = -a\cot(Z_t(0))\dt
\]
is bounded and continuous in a neighborhood of $T_t(0)+iR_t(0)$ and thus we have
\[
\frac{\rd T_t(0)}{\rd t} = \lim_{\delta \downarrow 0} \frac{\rd T_t(i\delta)}{\rd t} \mand \frac{\rd R_t(0)}{\rd t} = \lim_{\delta \downarrow 0} \frac{\rd R_t(i\delta)}{\rd t}.
\]

This holds with probability one for any fixed $t$, and thus it  holds for all rational $t$.  We may conclude that it holds for all $t>0$ by continuity of the derivative of $T_t(i\delta)$ and $R_t(i\delta)$.

To extend this claim to $Z_t(0)$ and $\Theta_t(0)$, simply note these are sums of $T_t(0)$, $R_t(0)$ and $B_t$.
\end{proof}

We may now prove $T_\infty$ exists.

\begin{proposition}\label{limexistProp}
	$T_\infty := \lim_{t \rightarrow \infty} T_t(0)$ exists with probability one.
\end{proposition}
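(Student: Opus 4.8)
The plan is to show that the path $t \mapsto T_t(0)$ has finite total variation on $[t_0,\infty)$ for some a.s.\ finite $t_0 > 0$; convergence as $t \to \infty$ then follows immediately, since a path of finite total variation is Cauchy. By the preceding lemma, $T_t(0)$ satisfies the pathwise equation
\[
\frac{d T_t(0)}{dt} = -a\,\frac{\sin(2\Theta_t(0))}{\cosh(2R_t(0))-\cos(2\Theta_t(0))},
\]
so it suffices to establish that $\int_{t_0}^\infty \big|\frac{d T_t(0)}{dt}\big|\,dt < \infty$ with probability one. I suppress the argument $0$ in what follows.

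The crucial input is that $R_t$ escapes to infinity at least linearly, and I would establish this first. Recall from the proof of the previous lemma that $R_t > 0$ for each fixed $t > 0$ with probability one, and note that $\frac{dR_t}{dt} = a\,\frac{\sinh(2R_t)}{\cosh(2R_t)-\cos(2\Theta_t)} > 0$ whenever $R_t > 0$, so the path $R_t$ is increasing once positive. Bounding $\cos(2\Theta_t)$ by its extreme value gives the pathwise inequality $\frac{dR_t}{dt} \geq a\,\frac{\sinh(2R_t)}{\cosh(2R_t)+1} = a\tanh(R_t)$, valid regardless of the value of $\Theta_t$. Fixing any $t_0 > 0$ with $R_{t_0} > 0$ and using monotonicity, one gets $\frac{dR_t}{dt} \geq a\tanh(R_{t_0}) > 0$ for all $t \geq t_0$, whence $R_t \geq R_{t_0} + a\tanh(R_{t_0})(t - t_0) \to \infty$ linearly.

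With this in hand the decay of the $T$-displacement is immediate. Since $|\sin(2\Theta_t)| \le 1$ and $\cosh(2R_t) - \cos(2\Theta_t) \geq \cosh(2R_t) - 1 = 2\sinh^2(R_t)$, we obtain $\big|\frac{d T_t}{dt}\big| \le \frac{a}{2\sinh^2(R_t)}$, which is bounded by $C e^{-2R_t}$ once $R_t \geq 1$. Combining with the linear lower bound on $R_t$ yields $\big|\frac{d T_t}{dt}\big| \le C' e^{-2ct}$ for all large $t$ and some a.s.\ positive constant $c$, so the tail integral converges absolutely and $T_\infty = \lim_{t\to\infty} T_t(0)$ exists with probability one.

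I expect the only subtlety, rather than a genuine obstacle, to be bookkeeping near $t = 0$: the integrand is of the indeterminate form $0/0$ at the initial point, where $R_0 = \Theta_0 = 0$, but this is harmless since only finiteness of the tail $\int_{t_0}^\infty$ is needed and the behavior on $[0,t_0]$ never enters the argument. The one genuinely important point to verify carefully is the a.s.\ positivity of $R_t$ at fixed positive times, supplied by the previous lemma, as this is precisely what upgrades the monotonicity of $R_t$ into guaranteed linear escape and hence into the exponential decay of $\frac{d T_t}{dt}$.
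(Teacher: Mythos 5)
Your proof is correct, and it shares the overall skeleton of the paper's argument --- establish linear growth of $R_t(0)$, deduce exponential decay of $|\rd T_t(0)/\rd t|$, and integrate the tail --- but it differs at the one step that carries real content, namely how the linear escape of $R$ is initiated. The paper does this through the stopping time $\tau = \inf\{t > 0 \mid R_t(0) \ge \log\sqrt{3}\}$, and to show $\tau < \infty$ almost surely it invokes the transience of the radial $\SLE$ tip, $\gamma(t) \rightarrow 0$ (cited from \cite{Lnew}), via the distributional identity $|\gamma(t)| = e^{-2R_t(0)}$ together with the monotonicity of $R_t(0)$; after $\tau$ it gets the deterministic rate $\rd R_t/\rd t \ge a/2$ and the explicit bound $|\rd T_{\tau+t}(0)/\rd t| \le \frac{2a}{3}e^{-at}$. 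You instead observe the pathwise differential inequality $\rd R_t/\rd t \ge a\,\sinh(2R_t)/(\cosh(2R_t)+1) = a\tanh(R_t)$ (the identity is correct: $\sinh(2R) = 2\sinh R\cosh R$ and $\cosh(2R)+1 = 2\cosh^2 R$), so that monotonicity of $R_t$ and of $\tanh$ give $R_t \ge R_{t_0} + a\tanh(R_{t_0})(t-t_0)$ for any fixed $t_0 > 0$. The only probabilistic input you then need is $R_{t_0}(0) > 0$ almost surely at a single fixed time, which is precisely what the preceding lemma supplies (it amounts to $\gamma(t_0) \in \Disk$), so your route eliminates the appeal to $\gamma(t) \rightarrow 0$ entirely and is in that sense more elementary and self-contained. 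What it gives up is quantitative uniformity: your escape rate $a\tanh(R_{t_0}(0))$ is a random, merely a.s.-positive constant, whereas the paper's argument localizes all the randomness in $\tau$ and then concludes that $T_\infty$ differs from $T_\tau(0)$ by at most a deterministic constant --- a slightly sharper statement, though for the proposition as stated both suffice, and the uniform second-moment bounds actually needed later are obtained in Section \ref{nonzeroSec} by a separate time change in any case. Your treatment of the singularity at $t = 0$ is also handled correctly: the preceding lemma gives the ODEs for all $t > 0$ with probability one, and only the tail integral enters your argument.
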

\begin{proof}
	Let $\tau = \inf\{t > 0 \ \mid \ R_t(0) \ge \log(\sqrt{3})\}$.  We first show this stopping time is finite with probability one.  By Lemma \ref{distLem}, we know
	\[
	Z_t(\cdot) = h_t^{-1}(B_t+\cdot)
	\]
	in distribution.
	Since
	\[
	\gamma(t) = \lim_{\delta \downarrow 0} e^{2ih_t^{-1}(B_t+i\delta)},
	\]
	we obtain
	\[
	\gamma(t) = e^{2i\Theta_t(0) - 2R_t(0)}
	\]
	in distribution.  Since $R_t(0)$ is non-decreasing in $t$ (by the ODE it satisfies),
	\begin{align*}
		\Prob\{\tau < \infty\} & = \lim_{t \rightarrow \infty} \Prob\{\tau < t\}\\
		& = \lim_{t \rightarrow \infty} \Prob\{R_t(0) \ge \log(\sqrt{3})\} \\
		& = \lim_{t \rightarrow \infty} \Prob\Big\{|\gamma(t)| \le \frac{1}{3}\Big\} \\
		& = 1,
	\end{align*}
	where the last line holds since with probability one $\lim_{t\rightarrow \infty} \gamma(t) = 0$ (see \cite{Lnew}).
	
	Since $\tau < \infty$ with probability one, $T_\tau(0)$ is well-defined.  We now use the ODEs satisfied by $R_t(0)$ and $T_t(0)$ to show that for all $t > \tau$, 
	\[
	\left|\frac{\rd T_t(0)}{\rd t}\right| \le \frac{2a}{3}e^{-at},
	\]
	and hence we have $\lim_{t \rightarrow \infty} T_t(0)$ exists with probability one, and indeed cannot differ by more than a constant from $T_\tau(0)$.
	
	First, we bound below the rate of growth of $R_t(0)$ after $\tau$.  For any $t > \tau$, we have
	\begin{align*}
		\frac{\rd R_{t}(0)}{\rd t} & = a \frac{\sinh(2R_t(0))}{\cosh(2R_t(0))-\cos(2\Theta_t(0))} \\
		& = a \frac{1 - e^{-4R_t(0)}}{1+e^{-4R_t(0)}-2e^{-2R_t(0)}\cos(2\Theta_t(0))} \\
		& \ge a \frac{1 - e^{-4R_t(0)}}{1+e^{-4R_t(0)}+2e^{-2R_t(0)}} \\
		& \ge \frac{a}{2}
	\end{align*}
	where the last line holds by our definition of $\tau$.  Thus we have
	\[
	R_{\tau+t}(0) \ge \log(\sqrt{3}) + \frac{a}{2}t.
	\]
	
	We now turn our attention to $T_t(0)$.  For any $t > \tau$, we have
	\begin{align*}
		\left|\frac{\rd T_{\tau+t}(0)}{\rd t}\right| & = \left|a \frac{\sin(2\Theta_{\tau+t}(0))}{\cosh(2R_{\tau+t}(0))-\cos(2\Theta_{\tau+t}(0))} \right| \\
		& \le \frac{2a}{e^{2R_{\tau+t}(0)}+e^{-2R_{\tau+t}(0)}-\cos(2\Theta_{\tau+t}(0))} \\
		& \le 2ae^{-2R_{\tau+t}(0)} \\
		& \le 2ae^{-2(\log(\sqrt{3}) + \frac{a}{2}t)} \\
		& = \frac{2a}{3}e^{-at}
	\end{align*} where we have used our bound on $R_{\tau+t}(0)$ in the second to last line.  Thus we have proven the limit exists with probability one.
\end{proof}

Thus, by the dominated convergence theorem, we have completed our proof of the following.
\begin{proposition} 
The limit
\[
F(0) = C_\Disk(1,0) \lim_{t \rightarrow \infty} e^{2a\nu t}\;\E^0[e^{-i\sigma \W_{\gamma(0,t]}^0}]
\]
exists when $\nu = \sigma^2/a$, and moreover
\[
F(0) = \E^0[e^{-2i\sigma T_\infty}],
\]
which implies $0 \le |F(0)| \le 1$.
\end{proposition}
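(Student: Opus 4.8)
The plan is to assemble the statement directly from the two results just established, moving the limits inside the expectation by bounded convergence. Since our normalization fixes $C_\Disk(1,0) = 1$, the claim reduces to showing that
\[
\lim_{t \rightarrow \infty} e^{2a\nu t}\;\E[e^{-i\sigma \W_{\gamma(0,t]}}] = \E[e^{-2i\sigma T_\infty}]
\]
when $\nu = \sigma^2/a$, together with the bound that this quantity has modulus at most one.

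First I would invoke the reduction proposition proved above, which already identifies the left-hand side with $\lim_{t \rightarrow \infty} \lim_{\delta \downarrow 0} \E[e^{-2i\sigma T_t(i\delta)}]$ (this is precisely where the choice $\nu = \sigma^2/a$ is consumed). It then remains to evaluate this iterated limit. For the inner limit, fix $t > 0$ and note that $T_t(i\delta) \rightarrow T_t(0)$ with probability one as $\delta \downarrow 0$, while the integrand $e^{-2i\sigma T_t(i\delta)}$ has modulus one and is therefore dominated by the constant function $1$; the bounded convergence theorem gives $\lim_{\delta \downarrow 0} \E[e^{-2i\sigma T_t(i\delta)}] = \E[e^{-2i\sigma T_t(0)}]$. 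This reduces the problem to computing $\lim_{t \rightarrow \infty} \E[e^{-2i\sigma T_t(0)}]$.

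For the outer limit I would apply Proposition \ref{limexistProp}, which asserts that $T_t(0) \rightarrow T_\infty$ with probability one as $t \rightarrow \infty$. Again the integrand $e^{-2i\sigma T_t(0)}$ is bounded in modulus by one, so a second application of bounded convergence yields $\lim_{t \rightarrow \infty} \E[e^{-2i\sigma T_t(0)}] = \E[e^{-2i\sigma T_\infty}]$, establishing at once both the existence of the limit and its value. The bound $0 \le |F(0)| \le 1$ then follows immediately from the triangle inequality for integrals, since $|F(0)| = |\E[e^{-2i\sigma T_\infty}]| \le \E[|e^{-2i\sigma T_\infty}|] = 1$.

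There is no genuinely hard step here: the analytic substance has already been discharged in the preceding two propositions (the decoupling of the Brownian factor and the exponential-decay estimate giving almost-sure convergence of $T_t(0)$). The only points requiring care are the two interchanges of limit and expectation, and both are routine precisely because each integrand has modulus exactly one, so the constant function $1$ serves as a dominating function and bounded convergence applies verbatim. If anything deserves a moment's attention, it is merely confirming that the almost-sure limit $T_\infty$ is well-defined as a measurable random variable, which is exactly the content of Proposition \ref{limexistProp}, so that $\E[e^{-2i\sigma T_\infty}]$ is meaningful.
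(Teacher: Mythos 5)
Your proposal is correct and follows essentially the same route as the paper: the paper also combines the reduction proposition (where $\nu = \sigma^2/a$ is used to cancel the Brownian factor) with Proposition \ref{limexistProp}, and then cites the dominated convergence theorem to pass the two limits inside the expectation, with the modulus bound immediate since $|e^{-2i\sigma T_\infty}| = 1$. Your write-up merely makes explicit the two applications of bounded convergence (in $\delta$ and then in $t$) that the paper compresses into a single sentence.
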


We now extend this to show the limit defining $F(z)$ exists for all $z \in \Disk$.  Consider $f_z$ as defined in Lemma \ref{fzLemma}.  We have
\begin{align*}
F(z) & = C_\Disk(1,z) \lim_{t \rightarrow \infty} e^{2a\nu t}\;\E^z[e^{-i\sigma \W_{\gamma(-s_0,t]}^z}] \\
& = C_\Disk(1,z) \lim_{t \rightarrow \infty} e^{2a\nu t}\;\E^0[e^{-i\sigma \W_{(f_z^{-1} \circ \gamma)(-s_0,t]}^z}]
\end{align*}
by conformal invariance and our choice of parametrization of $(f_z^{-1} \circ \gamma)$.  Thus, we need to understand $\W_{(f_z^{-1} \circ \gamma)(-s_0,t]}^z$.  

Consider any curve $\gamma:[0,\infty] \rightarrow \Disk$ with $\gamma(0) = 1$ and $\gamma(\infty) = 0$ and, to avoid pathologies, $0 \not \in \gamma(0,\infty)$ (as is the case for radial $\SLE$ curves from $1$ to $0$ in $\Disk$). For any conformal transformation $f$ with well-defined derivative at $1$, we consider $f\circ\gamma$ as being reparametrized by
\[
(f \circ \gamma)(t) = f(\gamma(s_t)) \mwhere s_t := t+\frac{1}{2a}\log|f'(0)|
\]
as we have done with our $\SLE$ curves.  By Taylor expanding the logarithm around $z$, we get
\begin{align*}
\W_{(f\circ\gamma)(-s_0,t]}^z & = \arg((f\circ\gamma)(t)-f(0)) - \arg(\mathbf{n}_{f(\Disk)}(f(1))) \\
& = \arg(f(\gamma(s_t))-f(0)) - \arg(f'(1)) \\
& = \arg(f'(0)) + \arg(\gamma(s_t)) - \arg(f'(1)) +O(|\gamma(s_t)|)\\ 
& = \W_{\gamma(0,s_t]}^0 + \arg(f'(0)) - \arg(f'(1)) +O(|\gamma(s_t)|)
\end{align*}
for $|\gamma(s_t)|$ sufficiently small, where the implicit constant depends only on $f$.  Again, care must be taken so the arguments are chosen to be continuous, which in this case is to say $\arg(f'(z))$ is continuous in $\Disk$.  This establishes the following.

\begin{lemma}\label{windLem}
	Let $f:\Disk \rightarrow f(\Disk)$ be a conformal map with $f'(1)$ well-defined.  Then
	\[
	\W_{(f\circ\gamma)(-s_0,t]}^{f(0)} = \W_{\gamma(0,s_t]}^{0} + \arg(f'(0)) - \arg(f'(1)) +O(|\gamma(s_t)|).
	\]
	as $t\rightarrow \infty$ where the implicit constant depends only on $f$.
\end{lemma}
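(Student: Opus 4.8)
The plan is to unwind the definition of $\W^{f(0)}_{(f\circ\gamma)(-s_0,t]}$ directly and match it, term by term, against $\W^0_{\gamma(0,s_t]}$, tracking the effect of the conformal factor $f'$ at the two relevant points $0$ and $1$. First I would dispose of the boundary (normal) term. Since $f$ is conformal, it carries the outward normal of $\Disk$ at $1$ to a vector pointing in the direction $f'(1)\,\mathbf{n}_\Disk(1)$; because $\mathbf{n}_\Disk(1)$ points in the positive real direction we have $\arg(\mathbf{n}_\Disk(1)) = 0$, and hence $\arg(\mathbf{n}_{f(\Disk)}(f(1))) = \arg(f'(1))$. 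This accounts for the $-\arg(f'(1))$ appearing in the statement.

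Next I would treat the interior term by Taylor expanding $\log(f(w)-f(0))$ about $w = 0$. Writing $f(w) - f(0) = f'(0)\,w\,\bigl(1 + O(|w|)\bigr)$ for $w$ near $0$ — legitimate once $t$ is large enough that $|\gamma(s_t)|$ is small, since $\gamma(s_t)\to 0$ — and taking imaginary parts of the logarithm gives
\[
\arg\bigl(f(\gamma(s_t)) - f(0)\bigr) = \arg(f'(0)) + \arg(\gamma(s_t)) + O(|\gamma(s_t)|),
\]
where the error absorbs $\Im\log\bigl(1 + O(|\gamma(s_t)|)\bigr) = O(|\gamma(s_t)|)$ with an implicit constant built from $f''(0)/f'(0)$, hence depending only on $f$. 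Recalling that $\W^0_{\gamma(0,s_t]} = \arg(\gamma(s_t))$ (again since $\arg(\mathbf{n}_\Disk(1)) = 0$), combining the two computations yields the claimed identity up to an additive integer multiple of $2\pi$.

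The only genuine content beyond this routine expansion is verifying that the spurious $2\pi$-multiple vanishes, i.e. that the continuous branches of all the arguments line up. Here I would lean on the continuity conventions built into the definition of $\W$: both sides are, by fiat, the versions continuous in $t$. Because $\Disk$ is simply connected and $f'$ is nonvanishing, $\arg(f'(\cdot))$ admits a single continuous branch on $\Disk$, so $\arg(f'(0))$ and $\arg(f'(1))$ are unambiguous constants. The displayed expansion is then an equality of continuous functions of $t$ for all large $t$; since $\W^{f(0)}_{(f\circ\gamma)(-s_0,t]}$ and $\W^0_{\gamma(0,s_t]} + \arg(f'(0)) - \arg(f'(1))$ are both continuous and agree modulo $2\pi$, their difference is a continuous $2\pi\Z$-valued function and hence constant; since the expansion pins the error to $O(|\gamma(s_t)|)\to 0$, this constant must be $0$. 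I expect this branch-matching to be the main (if modest) obstacle. A subtlety worth flagging is the standing hypothesis $0 \notin \gamma(0,\infty)$, which is precisely what guarantees $\arg(\gamma(s_t))$ is a well-defined continuous function of $t$ and rules out the pathological winding about $w$ illustrated in Figure \ref{badDomainFig}.
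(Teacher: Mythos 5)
Your core computation is exactly the paper's: the outward normal at $f(1)$ has argument $\arg(f'(1))$, the expansion $f(w)-f(0)=f'(0)\,w\,(1+O(|w|))$ gives $\arg(f(\gamma(s_t))-f(0)) = \arg(f'(0)) + \arg(\gamma(s_t)) + O(|\gamma(s_t)|)$, and $\W_{\gamma(0,s_t]}^{0} = \arg(\gamma(s_t))$ finishes the bookkeeping. The paper does precisely this and then disposes of the branch question with the remark that the arguments must be chosen continuously, i.e.\ that $\arg(f'(z))$ is taken continuous in $\Disk$.

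The one place where you go beyond the paper --- the branch-matching, which you rightly call the only genuine content --- is where your argument does not close. Write the difference of the two sides as $2\pi k(t) + \epsilon(t)$, where $\epsilon(t)=O(|\gamma(s_t)|)$ is the principal-branch error from the Taylor expansion; continuity and integer-valuedness do force $k(t)\equiv k$ to be constant for large $t$. But your final step, that the constant must vanish ``since the expansion pins the error to $O(|\gamma(s_t)|)\to 0$,'' is circular: the decay of $\epsilon$ says nothing about $k$, since the difference simply converges to $2\pi k$, and no amount of large-$t$ information can rule out $k\neq 0$. The integer is fixed by the normalization of the branches at the \emph{initial} time (the ``correct value as $t$ tends to zero'' in the definition of $\W$), while your expansion is only valid at large $t$, so the branches must be propagated along the entire curve. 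The clean way to do this: since $f$ is injective, $h(w):=(f(w)-f(0))/w$, with $h(0)=f'(0)$, is holomorphic and non-vanishing on the simply connected domain $\Disk$, hence admits a continuous branch of argument; then $\arg(f(\gamma(s))-f(0)) = \arg(\gamma(s)) + \arg(h(\gamma(s)))$ holds as an identity of continuous functions of $s$ once the branches are matched at $s=0$, and $\arg(h(\gamma(s_t))) = \arg(f'(0)) + O(|\gamma(s_t)|)$ as $t\to\infty$. This is what the paper's continuity remark is encoding. As written, your deduction would ``prove'' the lemma equally well with any fixed multiple of $2\pi$ added to the right-hand side.
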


We now return to our specific case. By using Lemma \ref{windLem} and Lemma \ref{fzLemma}, we see that
\begin{align*}
F(z) & = C_\Disk(1,z) \lim_{t \rightarrow \infty} e^{2a\nu t}\;\E^0[e^{-i\sigma \W_{(f_z^{-1} \circ \gamma)(-s_0,t]}^z}] \\
& = C_\Disk(1,z) \lim_{t \rightarrow \infty} e^{2a\nu t}\;\E^0[e^{-i\sigma [\W_{\gamma(0,s_t]}^0 - \arg(f_z'(z)) + \arg(f_z'(1)) +O(|\gamma(s_t)|)]}] \\
& = C_\Disk(1,z) e^{\nu\log|f_z'(z)|} e^{i\sigma\arg(f_z'(z))} \lim_{t\rightarrow \infty} e^{2a\nu s_t}\;\E^0[e^{-i\sigma[\W_{\gamma(0,s_t]}^0  +O(|\gamma(s_t)|)]}] \\
& = C_\Disk(1,z) |f_z'(z)|^{\nu} e^{-2i\sigma\arg(1-z)} \lim_{t\rightarrow \infty} e^{2a\nu s_t}\;\E^0[e^{-i\sigma[\W_{\gamma(0,s_t]}^0  +O(|\gamma(s_t)|)]}] \\
& = C_\Disk(1,z) (1-|z|^2)^{-\nu} e^{-2i\sigma\arg(1-z)} F(0) \\
& = |1-z|^{-2b}(1-|z|^2)^{b-\tilde b-\nu}e^{-2i\sigma\arg(1-z)} F(0)\\
& = |1-z|^{-2(b-\sigma)}(1-|z|^2)^{b-\tilde b - \nu}(1-z)^{-2\sigma}F(0).
\end{align*}
The third to last line is non-trivial to justify; however, the proof follows the same argument used to prove the existence of the limit.  That is: first rewrite $\W_{\gamma(0,s_t]}^0$ in terms of $B_t$ and $T_t$, second use Cauchy-Schwartz to cancel the Brownian term with the $e^{2a\nu s_t}$ term, and third use dominated convergence to move the limits inside the expectation.

This yields the following theorem.
\begin{theorem}\label{limitFunction}
	When $\nu = \sigma^2/a$, the limit defining $F(z)$ exists for all $z$ and moreover
	\[
	F(z) = |1-z|^{-2(b-\sigma)}(1-|z|^2)^{b-\tilde b - \nu}(1-z)^{-2\sigma}F(0).
	\]
\end{theorem}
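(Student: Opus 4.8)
The plan is to reduce the general statement to the already-established case $z=0$ by exploiting the conformal invariance of $\SLE_\kappa$, transporting the radial curve from $1$ to $z$ back to a radial curve from $1$ to $0$ through the disk automorphism $f_z$ of Lemma~\ref{fzLemma}. First I would write $F(z) = C_\Disk(1,z)\lim_{t\to\infty}e^{2a\nu t}\E^z[e^{-i\sigma\W_{\gamma(-s_0,t]}^z}]$ and apply conformal invariance to replace the $\SLE$ from $1$ to $z$ by $f_z^{-1}\circ\gamma$, where $\gamma$ is the $\SLE$ from $1$ to $0$ governing $\E^0$; since $f_z^{-1}(0)=z$ and $f_z^{-1}(1)=1$, the push-forward is exactly the desired curve, provided I reparametrize by $s_t = t+\frac{1}{2a}\log|(f_z^{-1})'(0)|$ as prescribed for $\SLE$ curves.

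The next step is to resolve the winding number $\W_{(f_z^{-1}\circ\gamma)(-s_0,t]}^z$ using Lemma~\ref{windLem} with $f=f_z^{-1}$. This expresses it as $\W_{\gamma(0,s_t]}^0 + \arg((f_z^{-1})'(0)) - \arg((f_z^{-1})'(1)) + O(|\gamma(s_t)|)$, and via the inverse-derivative relation $\arg((f_z^{-1})'(w)) = -\arg(f_z'(f_z^{-1}(w)))$ together with the explicit values $\arg(f_z'(1))=0$ and $\arg(f_z'(z))=-2\arg(1-z)$ from Lemma~\ref{fzLemma}, the deterministic correction collapses to $2\arg(1-z)$. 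Meanwhile the reparametrization converts the normalizing factor by $e^{2a\nu t} = |f_z'(z)|^{\nu}\,e^{2a\nu s_t} = (1-|z|^2)^{-\nu}e^{2a\nu s_t}$, again using Lemma~\ref{fzLemma}. Pulling the deterministic terms out of the expectation then leaves the residual limit $\lim_{t\to\infty}e^{2a\nu s_t}\E^0[e^{-i\sigma[\W_{\gamma(0,s_t]}^0 + O(|\gamma(s_t)|)]}]$, which I would identify with $F(0)$.

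That identification is the one genuinely delicate point, and I expect it to be the main obstacle: the $O(|\gamma(s_t)|)$ error is random and coupled to the winding term, so it is not immediate that the limit equals the clean $z=0$ observable. The plan here is to repeat the argument used to establish the existence of $F(0)$ --- namely to write $\W_{\gamma(0,s_t]}^0 = 2T_{s_t}(0) + 2B_{s_t}$, use the Cauchy--Schwarz bound to decouple and cancel the Brownian factor against $e^{2a\nu s_t}$ under the choice $\nu=\sigma^2/a$, and then invoke dominated convergence (using $|\gamma(s_t)|\to 0$ almost surely, which forces the error term to vanish) to pass the limit inside and recover $\E^0[e^{-2i\sigma T_\infty}] = F(0)$.

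Finally I would substitute $C_\Disk(1,z) = |1-z|^{-2b}(1-|z|^2)^{b-\tilde b}$ and assemble the factors, obtaining $F(z) = |1-z|^{-2b}(1-|z|^2)^{b-\tilde b-\nu}e^{-2i\sigma\arg(1-z)}F(0)$; the stated form then follows from the identity $(1-z)^{-2\sigma} = |1-z|^{-2\sigma}e^{-2i\sigma\arg(1-z)}$, which lets me absorb the argument factor by writing $|1-z|^{-2b}e^{-2i\sigma\arg(1-z)} = |1-z|^{-2(b-\sigma)}(1-z)^{-2\sigma}$. Throughout, care must be taken that each argument is the branch continuous on $\Disk$, matching the convention fixed in Lemma~\ref{windLem}.
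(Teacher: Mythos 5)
Your proposal is correct and follows essentially the same route as the paper: conformal transport via $f_z$ to the $z=0$ case, Lemma~\ref{windLem} combined with the explicit values in Lemma~\ref{fzLemma} to get the deterministic correction $2\arg(1-z)$ and the factor $(1-|z|^2)^{-\nu}$ from the reparametrization, and then the Cauchy--Schwarz decoupling plus dominated convergence argument to identify the residual limit (with its random $O(|\gamma(s_t)|)$ error) with $F(0)$. You even flag the same step the paper singles out as the delicate one and resolve it exactly as the paper does, so there is nothing to add.
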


The same argument above can, with only a change of notation, yield the following conformal covariance rule.
\begin{theorem}\label{covRule}
Given a domain $D$ which is $C^1$ near a boundary point $w \in \bd D$ and a point $z$ in the interior.  Define
\[
F_D(w,z) := C_D(w,z) \lim_{t \rightarrow \infty} e^{2a\nu t}\;\E^z[e^{-i\sigma \W_{\gamma(-s_0,t]}^z}]. 
\] 
Let $f:D \rightarrow f(D)$ be a conformal transformation with $f'(w)$ well-defined, then  
\[
F_{f(D)}(f(w),f(z)) = |f'(z)|^{\sigma-\tilde b-\nu}|f'(w)|^{-b-\sigma}f'(z)^{-\sigma}f'(w)^{\sigma}F_D(w,z).
\]
\end{theorem}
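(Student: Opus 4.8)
The plan is to repeat the computation that established Theorem \ref{limitFunction}, now carrying along a general conformal map $f$ in place of the specific automorphism $f_z^{-1}$ and a general domain $D$ in place of $\Disk$. Unwinding the definition,
\[
F_{f(D)}(f(w),f(z)) = C_{f(D)}(f(w),f(z)) \lim_{t \to \infty} e^{2a\nu t}\,\E^{f(z)}[e^{-i\sigma\W^{f(z)}_{\tilde\gamma(-s_0,t]}}],
\]
where $\tilde\gamma$ is radial $\SLE_\kappa$ in $f(D)$ from $f(w)$ to $f(z)$. By the conformal invariance of $\SLE$, $\tilde\gamma$ has the law of $f\circ\gamma$ for $\gamma$ a radial $\SLE_\kappa$ in $D$ from $w$ to $z$, reparametrized so that $(f\circ\gamma)(t) = f(\gamma(s_t))$ with $s_t := t + \frac{1}{2a}\log|f'(z)|$; this is the correct shift since conformal radius scales by $|f'(z)|$ at the terminal point $z$. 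The total mass covariance rule of Section \ref{totSec}, which holds for conformal maps between general simply connected domains, gives $C_{f(D)}(f(w),f(z)) = |f'(w)|^{-b}|f'(z)|^{-\tilde b}C_D(w,z)$, supplying the $|f'(w)|^{-b}$ and $|f'(z)|^{-\tilde b}$ portions of the claimed prefactor.

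Next I would establish the general version of Lemma \ref{windLem}. Taylor expanding $f$ at the terminal point gives $\arg(f(\gamma(s_t))-f(z)) = \arg(f'(z)) + \arg(\gamma(s_t)-z) + O(|\gamma(s_t)-z|)$, while the outward normal transforms by $\arg(\mathbf{n}_{f(D)}(f(w))) = \arg(f'(w)) + \arg(\mathbf{n}_D(w))$, so that
\[
\W^{f(z)}_{(f\circ\gamma)(-s_0,t]} = \W^z_{\gamma(-s_0',s_t]} + \arg(f'(z)) - \arg(f'(w)) + O(|\gamma(s_t)-z|),
\]
with every argument taken continuous. Inserting this, the constant angular terms factor out of the expectation as $e^{-i\sigma\arg(f'(z))}e^{i\sigma\arg(f'(w))}$, while the time shift turns $e^{2a\nu t}$ into $|f'(z)|^{-\nu}e^{2a\nu s_t}$, supplying the remaining $|f'(z)|^{-\nu}$ factor. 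Collecting the factors and rewriting $e^{-i\sigma\arg(f'(z))} = f'(z)^{-\sigma}|f'(z)|^{\sigma}$ and $e^{i\sigma\arg(f'(w))} = f'(w)^{\sigma}|f'(w)|^{-\sigma}$ reassembles the exponents into exactly the stated combination $|f'(z)|^{\sigma-\tilde b-\nu}|f'(w)|^{-b-\sigma}f'(z)^{-\sigma}f'(w)^{\sigma}$.

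The one genuinely delicate point---already flagged in the derivation preceding Theorem \ref{limitFunction}---is that after the substitution the surviving factor is $\lim_{t\to\infty} e^{2a\nu s_t}\,\E^z[e^{-i\sigma(\W^z_{\gamma(-s_0',s_t]} + O(|\gamma(s_t)-z|))}]$, and I must argue this equals $F_D(w,z)/C_D(w,z)$. This is where I expect the main work. I would rewrite $\W^z_{\gamma(-s_0',s_t]}$ through the splitting $\Theta = T + B$, apply the Cauchy--Schwarz argument from the first Proposition of Section \ref{existSec} to cancel the Brownian factor against $e^{2a\nu s_t}$ (recalling $\nu = \sigma^2/a$), and then invoke dominated convergence---bounding $|e^{-i\sigma O(|\gamma(s_t)-z|)}-1|$ by a constant multiple of $|\gamma(s_t)-z|$, which tends to zero almost surely as $s_t\to\infty$---to push the limit inside the expectation and kill the error term. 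The existence of the limit defining $F_D(w,z)$ itself follows by mapping $D$ conformally to $\Disk$ and appealing to the existence already proven there, and everything else is bookkeeping identical to the proof of Theorem \ref{limitFunction}.
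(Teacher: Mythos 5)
Your proposal is correct and takes essentially the same route as the paper, which proves Theorem \ref{covRule} simply by noting that the derivation of Theorem \ref{limitFunction} goes through ``with only a change of notation'': the total-mass covariance rule, the generalized form of Lemma \ref{windLem}, the time-shift factor $|f'(z)|^{-\nu}$ from $s_t = t + \tfrac{1}{2a}\log|f'(z)|$, and the same Cauchy--Schwarz plus dominated-convergence justification of the limit. Your exponent bookkeeping, including the rewriting $e^{-i\sigma\arg f'(z)} = f'(z)^{-\sigma}|f'(z)|^{\sigma}$ and $e^{i\sigma\arg f'(w)} = f'(w)^{\sigma}|f'(w)|^{-\sigma}$, reassembles exactly to the stated covariance factor.
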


\subsection{Observations on Theorem \ref{limitFunction}, and the conformal observable}\label{confSec}

We wish to make a few observations on the above results. 

First, the result above does not ensure that $F(z)$ is not uniformly zero in all of $\Disk$.  To prove it is not, at least for a range of $\kappa$, we engage in a careful analysis of the reverse flow, which is the topic of the majority of the remainder of the paper.

Second, recall from the discrete setup that we are most interested in the case where $F(z)$ is a holomorphic function of $z$.

\begin{theorem}\label{holTheorem}
	For any $a$, there is a unique choice of $\sigma$ which makes $F(z)$ a holomorphic function of $z$.  In particular, for $\sigma = b$, we have
	\[
	F(z) = (1-z)^{-2b}F(0)
	\]
	and moreover it satisfies the conformal covariance rule (under the same conditions as Theorem \ref{covRule})
	\[
	F_{f(D)}(f(w),f(z)) = f'(z)^{-b} \overline{f'(w)}^{-b}F_D(w,z).
	\]
\end{theorem}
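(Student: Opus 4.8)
The plan is to start from the explicit formula in Theorem \ref{limitFunction} and test directly when the factor multiplying $F(0)$ is holomorphic. Writing $|1-z|^{-2(b-\sigma)} = (1-z)^{-(b-\sigma)}(1-\bar z)^{-(b-\sigma)}$ and $1-|z|^2 = 1-z\bar z$, I would collect the powers of $(1-z)$ to obtain
\[
F(z) = (1-z)^{-(b+\sigma)}(1-\bar z)^{-(b-\sigma)}(1-z\bar z)^{b-\tilde b-\nu}\,F(0).
\]
All of the $\bar z$-dependence now sits in the middle and right factors, so $F$ is holomorphic precisely when $\bd_{\bar z}$ of this expression vanishes identically on $\Disk$.

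Next I would compute the logarithmic derivative. Away from the (possible) zero $F(0)$ one has $\bd_{\bar z}\log F = \frac{b-\sigma}{1-\bar z} - (b-\tilde b-\nu)\frac{z}{1-z\bar z}$, and clearing denominators turns the vanishing condition into
\[
(b-\sigma) - (b-\tilde b-\nu)\,z + \big((b-\tilde b-\nu)-(b-\sigma)\big)\,z\bar z = 0
\]
for all $z\in\Disk$. Evaluating at $z=0$ forces the constant term to vanish, i.e.\ $\sigma=b$; this already pins down $\sigma$ and hence gives the \emph{uniqueness} half of the statement, since no other value can make the constant term disappear. What remains is to confirm that this candidate is genuinely consistent: at $\sigma=b$ the displayed equation reduces to $(b-\tilde b-\nu)(z\bar z - z)=0$, so holomorphicity at $\sigma=b$ is equivalent to the single identity $b-\tilde b-\nu=0$.

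The crux of the argument — and the one genuine computation — is therefore verifying $b-\tilde b-\nu=0$ when $\sigma=b$. Recalling $\nu=\sigma^2/a$, $b=(3a-1)/2$, and $\tilde b=\tfrac12(\tfrac1a-1)b$, I would factor out $b$ and check that the bracket $1-\tfrac12(\tfrac1a-1)-b/a$ collapses to zero after substituting $b/a=\tfrac32-\tfrac1{2a}$. Once this is in hand, setting $\sigma=b$ simultaneously kills the $(1-\bar z)$ factor (exponent $-(b-\sigma)=0$) and the $(1-z\bar z)$ factor (exponent $b-\tilde b-\nu=0$), leaving $F(z)=(1-z)^{-(b+\sigma)}F(0)=(1-z)^{-2b}F(0)$, which is manifestly holomorphic.

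Finally, for the covariance rule I would substitute $\sigma=b$ and $\nu=b^2/a$ into Theorem \ref{covRule}. The exponent $\sigma-\tilde b-\nu$ of $|f'(z)|$ is once more $b-\tilde b-\nu=0$, so that factor drops out, and the remaining factors combine as $|f'(w)|^{-b-\sigma}f'(z)^{-\sigma}f'(w)^{\sigma} = |f'(w)|^{-2b}f'(w)^{b}f'(z)^{-b}$. Writing $|f'(w)|^{-2b}=f'(w)^{-b}\overline{f'(w)}^{-b}$ and cancelling the $f'(w)^{b}$ leaves $f'(z)^{-b}\overline{f'(w)}^{-b}$, exactly the claimed rule. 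The only point requiring care throughout is the consistent choice of branches for the arguments and powers, so that the holomorphic and anti-holomorphic pieces are correctly separated; the real arithmetic obstacle is the single exponent identity $b-\tilde b-\nu=0$, which is precisely what makes the interior and anti-holomorphic factors vanish.
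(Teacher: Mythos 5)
Your proposal is correct and takes essentially the same route as the paper: both arguments read holomorphicity off the explicit formula of Theorem \ref{limitFunction} --- the paper by isolating the real-valued factor $|1-z|^{-2(b-\sigma)}(1-|z|^2)^{b-\tilde b-\nu}$ and forcing it to be constant, you by setting $\bd_{\bar z}\log F=0$ --- and both hinge on the same key verification that $b-\tilde b-\nu=0$ when $\sigma=b$, with the covariance rule following by direct substitution into Theorem \ref{covRule}. Your explicit Wirtinger-derivative computation and your flagging of the implicit assumption $F(0)\neq 0$ (which the paper's proof shares) are fine, so there is nothing to correct.
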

\begin{proof}

Consider the form of $F(z)$ from Theorem \ref{limitFunction}.  It is a product of $F(0)$, some arbitrary complex constant, $(1-z)^{-2\sigma}$, a holomorphic function, and 
\[
|1-z|^{-2(b-\sigma)}(1-|z|^2)^{b-\tilde b -\nu},
\]
a real valued function.  Thus, for $F(z)$ to be holomorphic, the real-valued function must be constant.  By examining the function in a neighborhood of $1$, we obtain the necessary condition $\sigma = b$.  Direct computation shows the function to be constant for this choice.
\end{proof}

This result agrees with what one would expect from the discrete theory (see \cite{buziosD,towards,discrete}).  Indeed if we return to the self-avoiding walk from Section \ref{discrete}, we see the value of $\sigma$ computed here agrees with what one would expect if the self-avoiding walk were to converge to $\SLE_{8/3}$ (both yielding $\sigma = \frac{5}{8}$).

It is also worth stating that all results proven thus far work for all values of $\kappa$, even in the $\kappa \ge 8$ case where the curves are spacefilling (see, \cite[Section 6]{RS} or \cite[Theorem 3.36]{parkcity} for a discussion of the geometric phases of $\SLE_\kappa$).  In the following sections, where we provide conditions to ensure $F(0) \neq 0$, this is no longer the case, and we are only able to provide non-trivial bounds for $\kappa < 8$.

A final comment of note on this computation is to draw attention to very surprising coincidence required for this proof to work: that $b-\tilde b -\nu = 0$ when $\sigma = b$.  In this computation, there is no \textit{a priori} reason that this should occur---it simply does.  Equally intriguing is that the same computation occurs in the recent work of Cardy despite a distinct point of view \cite{cardyTalk}.  This suggests that there might be a deeper interpretation of this relationship than the mere cancelation observed here.  Stated another way: the choice $\sigma = b$ can be seen directly from the definition to be required to ensure anti-holomorphicity in the boundary point, whereas the understanding of why this implies holomorphicity in the interior point remains formal.

\section{Non-triviality of the $\SLE$ parafermionic observable}\label{nonzeroSec}
We wish to show
\[
F(0) = \E[e^{-2i\sigma T_\infty}] = \lim_{t\rightarrow \infty} \lim_{\delta \rightarrow 0} \E[e^{-2i\sigma T_t(i\delta)}]
\]
does not vanish for a large range of $\kappa$.  To do so, we need to analyze the lifted form of the reverse radial Loewner equation from Section \ref{altSec}. We make use of the following method.

\begin{lemma}\label{nonZeroLem}
	For a symmetric random variable $X$, we have
	\[
	\E[e^{-i\sigma X}] \ge 1 - \frac{1}{2} \sigma^2\E[X^2].
	\]
\end{lemma}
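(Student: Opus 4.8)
The plan is to reduce the complex-valued expectation to a real one using symmetry, and then integrate a single elementary pointwise inequality. First I would exploit that $X$ and $-X$ have the same law: the imaginary part $\E[\sin(\sigma X)]$ is the expectation of an odd function of a symmetric random variable and hence vanishes, so that
\[
\E[e^{-i\sigma X}] = \E[\cos(\sigma X)],
\]
which is real. This observation is what makes the asserted inequality meaningful, since its right-hand side is real.

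Second, I would establish the pointwise bound $\cos(y) \ge 1 - \tfrac{1}{2}y^2$ for all $y \in \R$. The cleanest route is to set $g(y) := \cos(y) - 1 + \tfrac{1}{2}y^2$, note that $g$ is even with $g(0) = 0$, and compute $g'(y) = y - \sin(y) \ge 0$ for $y \ge 0$ (the standard estimate $\sin y \le y$ on $[0,\infty)$); combined with evenness this forces $g \ge 0$ everywhere.

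Finally, I would substitute $y = \sigma X$ to obtain $\cos(\sigma X) \ge 1 - \tfrac{1}{2}\sigma^2 X^2$ pointwise, and take expectations using monotonicity, concluding
\[
\E[e^{-i\sigma X}] = \E[\cos(\sigma X)] \ge 1 - \tfrac{1}{2}\sigma^2 \E[X^2].
\]
There is essentially no serious obstacle here; the only point worth a word is the degenerate case $\E[X^2] = \infty$, in which the right-hand side is $-\infty$ and the bound holds vacuously (the left side is at least $-1$). One may therefore assume a finite second moment without loss of generality, which is also the only regime in which the lemma will be applied in the sequel.
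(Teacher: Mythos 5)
Your proof is correct and follows essentially the same route as the paper's: symmetry of $X$ kills the imaginary part so that $\E[e^{-i\sigma X}] = \E[\cos(\sigma X)]$, and the elementary bound $\cos(y) \ge 1 - \tfrac{1}{2}y^2$ then gives the result. You merely spell out the pointwise inequality and the degenerate case $\E[X^2]=\infty$, which the paper leaves implicit.
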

\begin{proof}
Since $X$ is symmetric we know the expectation is real and hence
\[
\E[e^{-i\sigma X}] = \E[\cos(\sigma X)] \ge 1-\frac{1}{2}\sigma^2\E[X^2].
\]
\end{proof}
Thus, to bound $F(0)$ away from zero we bound $\E[T_t(i\delta)^2]$ above uniformly in $t$ and $\delta$.  It is much simpler to work with a reparametrized version of this process in which $R_t$ increases linearly.  In the next three subsections, we perform this reparametrizaion, bound the reparametrized process, and finally transfer this bound back to the originally parametrized process.

For notational simplicity, we let $\|X\|_2 := \E[X^2]^{1/2}$.

\subsection{The key time change}

Recall the equation
\[
\rd Z_t(z) = -a\cot(Z_t(z))\dt + \rd B_t.
\]
We deal exclusively with $Z_t(i\delta)$ and thus suppress the explicit $\delta$ dependence.

Recall $Z_t = \Theta_t + iR_t$ where $\Theta_t$ and $R_t$ satisfy
\begin{align*}
\rd \Theta_t & = -a \frac{\sin(2\Theta_t)}{\cosh(2R_t)-\cos(2\Theta_t)} \dt + \rd B_t, \\
\rd R_t & = a \frac{\sinh(2R_t)}{\cosh(2R_t)-\cos(2\Theta_t)} \dt.
\end{align*}

As noted in our proof of the existence of the limit in Proposition \ref{limexistProp}, $R_t$ is increasing and for sufficiently large $R_t$ the rate of increase is $a+O(e^{-R_t})$.  To take advantage of this regularity, we reparametrize this process by a time change $\tau(t)$ so that $R_{\tau(t)} = t + \delta$.  

For a fixed $\delta > 0$, this time change $\tau$ is very well-behaved.  Indeed, it only differs from a linear time change by at most a $\delta$ dependent additive constant.
\begin{lemma}\label{notBadTimeLem}
For any $\delta$ we have
\[
\sinh^{-1}(\sinh(\delta)e^{at}) \le R_t(i\delta) \le \cosh^{-1}(\cosh(\delta)e^{at})
\]
and hence
\[
at + \log(2\sinh(\delta))-\delta \le \tau^{-1}(t) \le at+\log(2\cosh(\delta))-\delta
\]
and
\[
\frac{1}{a}t+\frac{1}{a}(\delta - \log(2\sinh(\delta))) \le \tau(t) \le \frac{1}{a}t+\frac{1}{a}(\delta - \log(2\cosh(\delta))).
\]
\end{lemma}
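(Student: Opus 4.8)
The plan is to establish the three chains of inequalities in the order stated, since only the first display on $R_t(i\delta)$ carries genuine content and the remaining two follow by elementary manipulation. Write $R_t := R_t(i\delta)$, noting $R_0 = \delta > 0$. Because we work with $Z_t(i\delta)$ for $\delta>0$, the imaginary part $R_t$ is increasing and stays bounded away from the real axis, so it is a bona fide $C^1$ solution of its defining ODE, with coefficients smooth along the trajectory; consequently the differential inequalities below may be integrated without any regularization.

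For the first display I would bound the drift of $R_t$ \emph{pathwise}, discarding all information about $\Theta_t$. Starting from
\[
\frac{\rd R_t}{\rd t} = a\,\frac{\sinh(2R_t)}{\cosh(2R_t)-\cos(2\Theta_t)},
\]
the trivial bound $-1\le\cos(2\Theta_t)\le 1$ together with $\cosh(2R_t)\ge 1$ shows the denominator lies in the interval $[\cosh(2R_t)-1,\ \cosh(2R_t)+1]$, both endpoints strictly positive. Invoking the identities $\cosh(2R)+1 = 2\cosh^2 R$, $\cosh(2R)-1 = 2\sinh^2 R$, and $\sinh(2R)=2\sinh R\cosh R$, this collapses to the autonomous two-sided estimate
\[
a\tanh(R_t) \le \frac{\rd R_t}{\rd t} \le a\coth(R_t).
\]
Since $\tanh R_t$ and $\coth R_t$ are positive, I would rewrite these as $\frac{\rd}{\rd t}\log\sinh(R_t)\ge a$ and $\frac{\rd}{\rd t}\log\cosh(R_t)\le a$, then integrate from $0$ to $t$ using $R_0=\delta$. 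This gives $\sinh(R_t)\ge\sinh(\delta)e^{at}$ and $\cosh(R_t)\le\cosh(\delta)e^{at}$, and applying $\sinh^{-1}$ and $\cosh^{-1}$ yields exactly the first display.

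For the $\tau^{-1}$ bounds I would use that the defining relation $R_{\tau(t)}=t+\delta$ is equivalent to $\tau^{-1}(t)=R_t-\delta$, so the bounds on $R_t$ transfer immediately. To remove the inverse hyperbolic functions I apply the elementary estimates $\sinh^{-1}(y)=\log(y+\sqrt{y^2+1})\ge\log(2y)$ and $\cosh^{-1}(y)=\log(y+\sqrt{y^2-1})\le\log(2y)$, valid for $y>0$ and $y\ge 1$ respectively (and $\cosh(\delta)e^{at}\ge 1$ for $t\ge 0$). These convert $\sinh^{-1}(\sinh(\delta)e^{at})-\delta$ and $\cosh^{-1}(\cosh(\delta)e^{at})-\delta$ into the stated affine bounds $at+\log(2\sinh(\delta))-\delta$ and $at+\log(2\cosh(\delta))-\delta$. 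Finally, since $\tau^{-1}$ is increasing and sandwiched between the affine functions $L(t)=at+\log(2\sinh\delta)-\delta$ and $U(t)=at+\log(2\cosh\delta)-\delta$, substituting $t=\tau(s)$ and using monotonicity of $L$ and $U$ gives $U^{-1}(s)\le\tau(s)\le L^{-1}(s)$; solving the two linear equations produces precisely the third display.

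No step here presents a real obstacle: the work is the hyperbolic simplification of the drift and the ODE integration, both routine. The single point demanding care — and the only place where a sign can slip — is the inversion in the last step, where the roles of the $\cosh$ and $\sinh$ bounds \emph{swap sides} in passing from $\tau^{-1}$ to $\tau$, so that the lower bound for $\tau$ is governed by $\cosh(\delta)$ and the upper bound by $\sinh(\delta)$.
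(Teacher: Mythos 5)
Your proof is correct and follows essentially the same route as the paper's: bound the denominator $\cosh(2R_t)-\cos(2\Theta_t)$ between $\cosh(2R_t)-1$ and $\cosh(2R_t)+1$, integrate the resulting autonomous bounds (your $\tanh$/$\coth$ rewriting via $\frac{\rd}{\rd t}\log\sinh(R_t)\ge a$ and $\frac{\rd}{\rd t}\log\cosh(R_t)\le a$ is just the explicit integration of the two ODEs the paper solves), then apply $\cosh^{-1}(x)\le\log(2x)\le\sinh^{-1}(x)$ and invert the affine bounds. One remark: your final display, with the lower bound for $\tau$ governed by $\cosh(\delta)$ and the upper bound by $\sinh(\delta)$, is the correct one; the lemma as printed has these two swapped, and is in fact internally inconsistent as stated, since $\frac{1}{a}(\delta-\log(2\sinh\delta)) > \frac{1}{a}(\delta-\log(2\cosh\delta))$ would make the stated lower bound exceed the stated upper bound. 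So the caution you flag at the end is exactly right, and you have caught (and corrected) a typo in the statement rather than reproduced it.
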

\begin{proof}
We have
\[
\frac{\rd R_t}{\rd t} = a \frac{\sinh(2R_t)}{\cosh(2R_t)-\cos(2\Theta_t)}
\]
and hence $R_t$ is increasing and
\[
a \frac{\sinh(2R_t)}{\cosh(2R_t)+1} \le \frac{\rd R_t}{\rd t} \le a \frac{\sinh(2R_t)}{\cosh(2R_t)-1}.
\]
The solutions to these ODEs with $R_0 = \delta$ are
\[
\sinh^{-1}(\sinh(\delta)e^{at}) \mand \cosh^{-1}(\cosh(\delta)e^{at})
\]
giving the desired bounds on $R_t$.

Now, recall $R_{\tau(t)} = t+\delta$ by definition of $\tau(t)$, or equivalently $R_t = \tau^{-1}(t)+\delta$.  Combining this with the bounds
\[
\cosh^{-1}(x) \le \log(2x) \mand \sinh^{-1}(x) \ge \log(2x)
\]
gives the desired bounds on $\tau^{-1}(t)$, which may be rearranged to give the bounds on $\tau(t)$.  
\end{proof}

Given any process $A_t$, we adopt the notation $\hat A_t = A_{\tau(t)}$. 
\begin{lemma}\label{reparamLem}
We have
\[
\frac{\rd \tau}{\rd t} = \frac{\cosh(2(t+\delta))-\cos(2\hat\Theta_t)}{a\sinh(2(t+\delta))},
\]
and thus
\[
\rd \hat\Theta_t =  -\frac{\sin(2\hat\Theta_t)}{\sinh(2(t+\delta))} \dt + \left(\frac{\cosh(2(t+\delta))-\cos(2\hat\Theta_t)}{a\sinh(2(t+\delta))}\right)^{1/2} \rd W_t
\]
and
\[
\rd \hat T_t = -\frac{\sin(2\hat\Theta_t)}{\sinh(2(t+\delta))} \dt.
\]
\end{lemma}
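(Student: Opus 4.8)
The plan is to differentiate the defining relation $R_{\tau(t)} = t+\delta$ and then invoke the standard time-change theorem for continuous semimartingales to transport the SDE and ODE of Section \ref{altSec} onto the reparametrized processes. Since we work throughout with $Z_t(i\delta)$ for fixed $\delta > 0$, the starting value is $R_0 = \delta > 0$, and the ODE
\[
\frac{\rd R_t}{\rd t} = a\,\frac{\sinh(2R_t)}{\cosh(2R_t)-\cos(2\Theta_t)}
\]
has a strictly positive right-hand side whenever $R_t > 0$ (the numerator is positive and the denominator is at least $\cosh(2R_t)-1 > 0$). Hence $t \mapsto R_t - \delta$ is a continuous, strictly increasing bijection of $[0,\infty)$ onto itself, using the lower bound $R_t \ge \sinh^{-1}(\sinh(\delta)e^{at})$ of Lemma \ref{notBadTimeLem} to guarantee $R_t \to \infty$. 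Thus $\tau$ is exactly the pathwise inverse of $R_\cdot - \delta$; it satisfies $\tau(0)=0$, is continuously differentiable, and for each fixed $t$ the identity $\{\tau(t) \le s\} = \{R_s \ge t+\delta\} \in \mathcal{F}_s$ shows $\tau(t)$ is a stopping time, so $(\tau(t))_{t\ge 0}$ is an adapted time change.

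To obtain $\rd\tau/\rd t$, I would differentiate $R_{\tau(t)} = t+\delta$ in $t$ by the chain rule. Evaluating the ODE for $R$ at time $\tau(t)$, where $R_{\tau(t)} = t+\delta$ and $\Theta_{\tau(t)} = \hat\Theta_t$, gives
\[
a\,\frac{\sinh(2(t+\delta))}{\cosh(2(t+\delta))-\cos(2\hat\Theta_t)}\cdot\frac{\rd\tau}{\rd t} = 1,
\]
which rearranges immediately to the stated formula for $\rd\tau/\rd t$.

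Next I would apply the time change to $\Theta$ and $T$. Writing $\Theta_t = \Theta_0 + \int_0^t \beta_s\dd s + B_t$ with $\beta_s$ the drift coefficient from Section \ref{altSec}, the substitution $s = \tau(u)$ in the finite-variation part yields $\int_0^{\tau(t)}\beta_s\dd s = \int_0^t \beta_{\tau(u)}\,\tau'(u)\dd u$, so its contribution to $\hat\Theta_t$ has drift $\beta_{\tau(t)}\tau'(t)$. By the time-change theorem for continuous local martingales, $B_{\tau(t)}$ is a continuous local martingale with quadratic variation $\tau(t)$, and since $\tau' > 0$ it may be represented as $\int_0^t \sqrt{\tau'(u)}\dd W_u$ for a standard Brownian motion $W$. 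Substituting the formula for $\tau'$ into $\beta_{\tau(t)}\tau'(t)$, the factor $\cosh(2(t+\delta))-\cos(2\hat\Theta_t)$ and the constant $a$ cancel against the denominator of $\beta$, leaving drift $-\sin(2\hat\Theta_t)/\sinh(2(t+\delta))$; taking the square root of $\tau'$ produces the stated diffusion coefficient. Because $T_t$ carries no martingale part, the identical substitution with the Brownian term omitted gives the claimed ODE for $\hat T_t$ directly.

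The one genuinely delicate point is the stochastic time change: one must verify that $\tau$ is an adapted, continuous, strictly increasing, $C^1$ time change so that $B_{\tau(t)}$ is again a (time-changed) Brownian motion and its rewriting as $\int_0^t\sqrt{\tau'}\dd W$ is legitimate. This is precisely where the strict monotonicity and regularity of $R_t$ — furnished by $R_t \ge \delta > 0$ and the bounds of Lemma \ref{notBadTimeLem} — do the essential work. Everything following the time change is routine trigonometric cancellation, so I expect that setup to be the main obstacle rather than the algebra.
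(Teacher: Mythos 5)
Your proposal is correct and follows essentially the same route as the paper: differentiate the defining relation $R_{\tau(t)} = t+\delta$ (the paper writes this in the equivalent integral form) to get $\rd\tau/\rd t$, then push the drift and Brownian parts of $\Theta_t$ through the time change, with the drift for $\hat T_t$ following since $T_t$ has no martingale part. The only difference is that you spell out the technical underpinnings---strict monotonicity of $R_t$, the stopping-time property of $\tau(t)$, and the representation $B_{\tau(t)} = \int_0^t \sqrt{\tau'(u)}\dd W_u$ via L\'evy/Dambis--Dubins--Schwarz---which the paper's proof leaves implicit.
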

\begin{proof}
By definition of $\tau(t)$,
\[
\int_0^{\tau(t)} a \frac{\sinh(2R_s)}{\cosh(2R_s)-\cos(2\Theta_s)} \ds = t,
\]
and thus
\[
\frac{\rd \tau}{\rd t} = \frac{\cosh(2(t+\delta))-\cos(2\hat\Theta_t)}{a\sinh(2(t+\delta))}.
\]

We may apply this time change to $\Theta_t$ to see
\begin{align*}
\rd \hat\Theta_t & = -\frac{\sin(2\hat\Theta_t)}{\sinh(2(t+\delta))} \dt + \rd \hat B_{t} \\
& =  -\frac{\sin(2\hat\Theta_t)}{\sinh(2(t+\delta))} \dt + \left(\frac{\cosh(2(t+\delta))-\cos(2\hat\Theta_t)}{a\sinh(2(t+\delta))}\right)^{1/2} \rd W_t.
\end{align*}
The claimed statement for $T_t$ now follows.
\end{proof}

\subsection{The bound on $\E[\hat T_t^2]$ and $\E[T_t^2]$}

Our argument proceeds as follows. From Lemma \ref{reparamLem}, we know
\[
\rd \hat T_t = -\frac{\sin(2\hat\Theta_t)}{\sinh(2(t+\delta))} \dt.
\]
This ODE is well-behaved for $t$ large due to the exponential decay provided by $\sinh(t+\delta)$.  Thus, we must concentrate on controlling the ODE for small times.  In this case, the only term which can provide help is $\sin(2\hat\Theta_t)$.  Thus, we must bound the growth of $\hat\Theta_t$. We do so by deriving a differential inequality which bounds the variance of $\hat\Theta_t$.  We then may produce a bound on the growth of the variance of $\hat T_t$ for small times $t$ which when combined with the exponential decay gives us a uniform bound the variance of $\hat T_t$ for all $t$ and $\delta$. 

By Lemma \ref{reparamLem}, we have
\[
\rd \hat\Theta_t  =  -\frac{\sin(2\hat\Theta_t)}{\sinh(2(t+\delta))} \dt + \left(\frac{\cosh(2(t+\delta))-\cos(2\hat\Theta_t)}{a\sinh(2(t+\delta))}\right)^{1/2} \rd W_t 
\]
and hence
\begin{align*}
\rd \hat\Theta_t^2 & =  \left[\frac{\cosh(2(t+\delta))-\cos(2\hat\Theta_t)}{a\sinh(2(t+\delta))}-\frac{2\hat\Theta_t\sin(2\hat\Theta_t)}{\sinh(2(t+\delta))}\right] \rd t \\
& \quad + 2\hat\Theta_t\left(\frac{\cosh(2(t+\delta))-\cos(2\hat\Theta_t)}{a\sinh(2(t+\delta))}\right)^{1/2} \rd W_t. 
\end{align*}

Since the drift and diffusion coefficients in the SDE for $\hat\Theta_t$ are uniformly bounded for any fixed $\delta > 0$, we have for any $S > 0$
\[
\int_0^S \E[\hat\Theta_s^2] \ds < \infty
\]
and thus the same property holds for the diffusion coefficient of the SDE for $\hat \Theta_t^2$.  This is sufficient to imply the diffusion component is a martingale (see, for example, \cite[Corollary 3.2.6]{oksendal}), and thus 
\begin{align*}
\frac{\rd}{\rd t}\E[\hat\Theta_t^2] & = \frac{\cosh(2(t+\delta))-\E[\cos(2\hat\Theta_t)]}{a\sinh(2(t+\delta))}-\frac{2\E[\hat\Theta_t\sin(2\hat\Theta_t)]}{\sinh(2(t+\delta))} \\
& = \frac{\cosh(2(t+\delta))-1}{a\sinh(2(t+\delta))}+\frac{\E[1 - \cos(2\hat\Theta_t) - 2a\hat\Theta_t\sin(2\hat\Theta_t)]}{a\sinh(2(t+\delta))} \\
& \le \frac{1}{a}(t+\delta) + \frac{\E[1 - \cos(2\hat\Theta_t) - 2a\hat\Theta_t\sin(2\hat\Theta_t)]}{a\sinh(2(t+\delta))}
\end{align*}
where we have used the inequality
\[
\frac{\cosh(t)-1}{\sinh(t)} \le t.
\]  

To proceed, we use a bound of the form
\[
1 - \cos(2x) - 2a x\sin(2x) \le \beta x^2
\] for some $\beta = \beta(a)$. Such bounds may be readily found, however, since we use different values of $\beta$ for different ranges of $\kappa$, we proceed in the proof with a general value for $\beta$, and then specialize at the end.

Assuming such a bound, we get
\[
\E[1 - \cos(2\hat\Theta_t) - 2a\hat\Theta_t\sin(2\hat\Theta_t)] \le \beta\E[\hat\Theta_t^2]
\]
and thus the differential equation becomes
\[
\frac{\rd}{\rd t}\E[\hat\Theta_t^2] \le \frac{1}{a}(\delta+t) + \frac{\beta\E[\hat\Theta^2_t]}{2a(\delta+t)}.
\]

The solution to this differential inequality with the initial condition $\E[\hat\Theta_t^2] = 0$ is
\[
\E[\hat\Theta_t^2] \le \frac{2}{4a-\beta}\left[(t+\delta)^2 - \delta^{2-\beta/2a}(t+\delta)^{\beta/2a}\right].
\] 

We use this to obtain a bound on $\hat T_t(i\delta)$.  This satisfies
\[
\rd \hat T_t = - \frac{\sin(2\hat\Theta_t)}{\sinh(2(t + \delta))} \dt,
\]
and thus
\[
\rd \hat T_t^2 = - \frac{2\hat T_t\sin(2\hat\Theta_t)}{\sinh(2(t + \delta))} \dt.
\]
By applying Cauchy-Schwartz, we have
\[
\Big|\frac{\rd}{\rd t} \E[\hat T_t^2]\Big| \le \E\Big[\Big|\frac{\rd}{\rd t} \hat T_t^2\Big|\Big] =  \frac{2\E[|\hat T_t\sin(2\hat\Theta_t)|]}{\sinh(2(t + \delta))} \le \frac{2\|\hat T_t\|_2\cdot\|\sin(2\hat\Theta_t)\|_2}{\sinh(2(t + \delta))}
\]
which can be rearranged to
\[
\Big|\frac{\rd}{\rd t} \|\hat T_t\|_2\Big| = \left|\frac{\frac{\rd}{\rd t} \E[\hat T_t^2]}{2\|\hat T_t\|_2}\right| \le \frac{\|\sin(2\hat\Theta_t)\|_2}{\sinh(2(t + \delta))}.
\]

Before proceeding to integrate this to bound $\|\hat T_t\|_2$, we collect a few statements for later use.
\begin{lemma}\label{tblemma}
Let
\[
U_t = \int_0^t \frac{\|\sin(2\hat\Theta_s)\|_2}{\sinh(2(s + \delta))}\ds.
\]
Then
\[
\E\Big[\Big|\frac{\rd}{\rd t} \hat T_t^2\Big|\Big] \le 2 U_t \frac{\rd U_t}{\rd t} = \frac{\rd U_t^2}{\rd t}.
\]
\end{lemma}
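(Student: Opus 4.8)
The plan is to assemble the lemma from the two displayed estimates established immediately above its statement; the only genuine content is the bound $\|\hat T_t\|_2 \le U_t$, after which everything is bookkeeping. First I would record that, by Lemma \ref{reparamLem}, $\hat T_t$ solves a genuine noise-free ODE, so for each fixed $\delta > 0$ the path $t \mapsto \hat T_t$ is differentiable with $\frac{\rd}{\rd t}\hat T_t^2 = -\frac{2\hat T_t\sin(2\hat\Theta_t)}{\sinh(2(t+\delta))}$ pathwise. Moreover $|\hat T_t| \le \int_0^t \frac{\ds}{\sinh(2(s+\delta))}$ is deterministically bounded for fixed $\delta$ and finite $t$, so $g(t) := \E[\hat T_t^2]$ is finite and differentiation under the expectation is licit. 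Taking expectations of the absolute value and applying Cauchy--Schwartz gives
\[
\E\Big[\Big|\frac{\rd}{\rd t}\hat T_t^2\Big|\Big] \le \frac{2\|\hat T_t\|_2\,\|\sin(2\hat\Theta_t)\|_2}{\sinh(2(t+\delta))},
\]
and the second factor on the right is exactly $\frac{\rd U_t}{\rd t}$ by the fundamental theorem of calculus. Thus, once $\|\hat T_t\|_2 \le U_t$ is known, substituting it and using $2 U_t \frac{\rd U_t}{\rd t} = \frac{\rd}{\rd t}U_t^2$ completes the proof.

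The key step is therefore the inequality $\|\hat T_t\|_2 \le U_t$, which I would obtain by integrating the differential bound $\big|\frac{\rd}{\rd t}\|\hat T_t\|_2\big| \le \frac{\rd U_t}{\rd t}$ derived just before the lemma, against the initial condition. That condition is $\hat T_0 = 0$: since $R_0(i\delta) = \Im(i\delta) = \delta$ and $R_t$ is strictly increasing, the defining relation $R_{\tau(0)} = \delta$ forces $\tau(0) = 0$, while $T_0(i\delta) = \Theta_0(i\delta) - B_0 = 0$; hence $\|\hat T_0\|_2 = 0 = U_0$.

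The one point requiring care---and the main obstacle---is that $\|\hat T_t\|_2 = g(t)^{1/2}$ need not be differentiable where it vanishes, so the naive integration of $\frac{\rd}{\rd t}\|\hat T_t\|_2$ is not automatically valid. I would sidestep this by regularizing: since $g$ is $C^1$ for fixed $\delta$ and $|g'(t)| \le 2\sqrt{g(t)}\,\frac{\|\sin(2\hat\Theta_t)\|_2}{\sinh(2(t+\delta))}$, one has for each $\epsilon > 0$
\[
\frac{\rd}{\rd t}\sqrt{g(t)+\epsilon} = \frac{g'(t)}{2\sqrt{g(t)+\epsilon}} \le \frac{\sqrt{g(t)}\,\|\sin(2\hat\Theta_t)\|_2}{\sqrt{g(t)+\epsilon}\,\sinh(2(t+\delta))} \le \frac{\rd U_t}{\rd t},
\]
where the denominator is bounded below by $\sqrt{\epsilon} > 0$, so no differentiability issue arises. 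Integrating from $0$ to $t$ gives $\sqrt{g(t)+\epsilon} \le U_t + \sqrt{\epsilon}$, and letting $\epsilon \downarrow 0$ yields $\|\hat T_t\|_2 \le U_t$. Feeding this back into the Cauchy--Schwartz estimate of the first paragraph produces $\E[|\frac{\rd}{\rd t}\hat T_t^2|] \le 2 U_t \frac{\rd U_t}{\rd t} = \frac{\rd}{\rd t}U_t^2$, as claimed.
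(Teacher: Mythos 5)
Your proposal is correct and follows essentially the same route as the paper's proof, which simply integrates the differential bound $\big|\frac{\rd}{\rd t}\|\hat T_t\|_2\big| \le \frac{\rd U_t}{\rd t}$ to get $\|\hat T_t\|_2 \le U_t$ and plugs this into the Cauchy--Schwartz estimate. Your $\epsilon$-regularization is a welcome extra touch of rigor---the paper's displayed identity divides by $2\|\hat T_t\|_2$, which is problematic exactly at $t=0$ where $\hat T_0 = 0$---but it patches rather than changes the argument.
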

\begin{proof}
Integrate the statement prior to the lemma to get $\|\hat T_t\|_2 \le U_t$.  Plugging this into the statement two above gives the result.
\end{proof}

We now integrate to produce the bound on $\|\hat T_t\|_2$.  Since $|\sin(x)| \le 1 \wedge |x|$, we get
\begin{align*}
\Big|\frac{\rd}{\rd t} \|\hat T_t\|_2 \Big|& \le \frac{1 \wedge 2\|\hat\Theta_t\|_2}{\sinh(2(t + \delta))}\\
& = \frac{1}{\sinh(2(t + \delta))} \wedge
\frac{2^{3/2}\left[(t+\delta)^2 - \delta^{2-\beta/2a}(t+\delta)^{\beta/2a}\right]^{1/2}}{(4a-\beta)^{1/2}\sinh(2(t + \delta))}.
\end{align*}

The left term is small large values of $t$, whereas the right term is small for small values of $t$.  We wish to swap the two bounds at $t_0$ chosen so that
\[
\frac{4a-\beta}{8} = (t_0+\delta)^2 - \delta^{2-\beta/2a}(t_0+\delta)^{\beta/2a}.
\]
We may bound this by
\[
t_0 + \delta \le \left(\frac{4a-\beta}{8}\right)^{1/2}.
\]
By switching at this upper bound for $t_0$, rather than at $t_0$ itself, we only hurt our estimate, and hence we get
\begin{align*}
\|\hat T_t \|_2 & \le \int_\delta^{\left(\frac{4a-\beta}{8}\right)^{1/2}} \frac{2^{3/2}\left[s^2-\delta^{2-\beta/2a}s^{\beta/2a}\right]^{1/2}}{(4a-\beta)^{1/2}\sinh(2s)}\ds + 
\int_{\left(\frac{4a-\beta}{8}\right)^{1/2}}^\infty \frac{1}{\sinh(2s)}\ds \\
& \le \int_\delta^{\left(\frac{4a-\beta}{8}\right)^{1/2}} \frac{2^{3/2}s}{(4a-\beta)^{1/2}\sinh(2s)}\ds +  \int_{\left(\frac{4a-\beta}{8}\right)^{1/2}}^\infty \frac{1}{\sinh(2s)}\ds \\
& \le \frac{1}{2} + \int_{\left(\frac{4a-\beta}{8}\right)^{1/2}}^\infty \frac{1}{\sinh(2s)}\ds \\
& = \frac{1}{2} + \frac{1}{2}\int_{\left(\frac{4a-\beta}{2}\right)^{1/2}}^\infty \frac{1}{\sinh(s)}\ds
\end{align*}
where we used the bound $\sinh(s) \ge s$ for $s \ge 0$.  This may be integrated exactly since
\[
\int_{\left(\frac{4a-\beta}{2}\right)^{1/2}}^\infty \frac{1}{\sinh(s)}\ds = -\log\Big(\tanh{\textstyle\sqrt\frac{4a-\beta}{8}}\Big),
\]
yielding the following result.
\begin{proposition}\label{Tbound2}
Let $\hat T_t$ be as above, then
\[
\|\hat T_t \|_2 \le \frac{1}{2} - \frac{1}{2}\log\Big(\tanh{\textstyle\sqrt\frac{4a-\beta}{8}}\Big),
\]
or in the notation of Lemma \ref{tblemma},
\[
U_\infty \le \frac{1}{2} - \frac{1}{2}\log\Big(\tanh{\textstyle\sqrt\frac{4a-\beta}{8}}\Big).
\]
\end{proposition}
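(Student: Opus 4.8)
The plan is to integrate the differential inequality established immediately before the statement,
\[
\Big|\frac{\rd}{\rd t}\|\hat T_t\|_2\Big| \le \frac{\|\sin(2\hat\Theta_t)\|_2}{\sinh(2(t+\delta))},
\]
from the initial lifted time out to $t=\infty$. Since $\|\hat T_0\|_2 = 0$, this yields $\|\hat T_t\|_2 \le U_\infty$ for every $t$, so it suffices to bound $U_\infty$ uniformly in $\delta$ by the claimed constant. The whole argument is the evaluation of a single improper integral, with the only cleverness residing in how the integrand is dominated on two complementary time ranges.

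First I would control the numerator using the elementary inequality $|\sin x| \le 1 \wedge |x|$, giving $\|\sin(2\hat\Theta_t)\|_2 \le 1 \wedge 2\|\hat\Theta_t\|_2$, and then insert the variance bound derived earlier,
\[
\E[\hat\Theta_t^2] \le \frac{2}{4a-\beta}\Big[(t+\delta)^2 - \delta^{2-\beta/2a}(t+\delta)^{\beta/2a}\Big].
\]
This presents the integrand as the minimum of $1/\sinh(2(t+\delta))$, which is small for large $t$, and a second quantity that is small for small $t$. The strategy is to split the integral at a crossover time $t_0$ where the two competing bounds agree; because the bracketed quantity is increasing in $t$, one may replace $t_0$ by the explicit upper estimate $t_0 + \delta \le \sqrt{(4a-\beta)/8}$, and switching there rather than at $t_0$ itself can only enlarge the estimate.

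After the substitution $s = t+\delta$, on the short interval I would drop the subtracted term to bound the bracket by $s^2$, reducing the integrand to $2^{3/2}s/\big((4a-\beta)^{1/2}\sinh(2s)\big)$, and then apply $\sinh(s)\ge s$ to see this piece contributes at most $\tfrac12$. On the long interval the integrand is exactly $1/\sinh(2s)$, which integrates in closed form: with $u=2s$ it becomes $\tfrac12\int 1/\sinh u\,\rd u = \tfrac12\log\tanh(u/2)$, evaluated from $\sqrt{(4a-\beta)/2}$ to $\infty$, giving $-\tfrac12\log\tanh\sqrt{(4a-\beta)/8}$. Summing the two contributions produces the stated bound, and the same bound holds for $U_\infty$ by Lemma \ref{tblemma}.

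The main obstacle is bookkeeping rather than conceptual. One must verify carefully that replacing the exact crossover $t_0$ by its upper bound genuinely loosens the estimate (this rests on monotonicity of the variance bound), and one must track the factor of two coming from the argument $2(t+\delta)$ through both the substitution and the closed-form evaluation of $\int 1/\sinh$. A secondary technical point, important for the sequel, is that every bound here is uniform in $\delta$, so the estimate survives the eventual passage $\delta \downarrow 0$; confirming this uniformity is what justifies treating $\delta$ as a harmless parameter throughout the computation.
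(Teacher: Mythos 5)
Your proposal is correct and follows essentially the same route as the paper's own proof: integrate the differential inequality for $\|\hat T_t\|_2$, dominate the numerator via $|\sin x|\le 1\wedge|x|$ together with the variance bound on $\hat\Theta_t$, split the integral at $s_1=\sqrt{(4a-\beta)/8}$, bound the short piece by $\tfrac12$ using $\sinh(s)\ge s$, and evaluate the tail $\int 1/\sinh(2s)\,\ds$ in closed form, all uniformly in $\delta$. One small remark (a slip shared with the paper): the crossover equation actually forces $t_0+\delta\ge\sqrt{(4a-\beta)/8}$, not $\le$, but this is immaterial since splitting the integral at \emph{any} point and using the pointwise upper bounds $B$ before and $A=1/\sinh(2(\cdot+\delta))$ after always yields a valid upper bound for $\int\min(A,B)$, so the stated estimate stands.
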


This produces the desired bound after reparametrization.  We now show this bound also holds in the original parametrization.

\begin{proposition}\label{unParamBound}
	\[
	\E[T_t^2] \le \Big[\frac{1}{2} - \frac{1}{2}\log\Big(\tanh{\textstyle\sqrt\frac{4a-\beta}{8}}\Big)\Big]^2.
	\]
\end{proposition}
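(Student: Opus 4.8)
The plan is to transfer the uniform-in-$t$ bound $\|\hat T_t\|_2 \le \tfrac12 - \tfrac12\log\big(\tanh\sqrt{\tfrac{4a-\beta}{8}}\big)$ of Proposition \ref{Tbound2} back to the originally parametrized process $T_s$, the only real difficulty being that the time change $\tau$ is random, so one cannot simply substitute a random time into a deterministic $L^2$ estimate. First I would record that $\tau$ is an increasing bijection of $[0,\infty)$ onto itself. Indeed $R_0(i\delta) = \delta$, and by Lemma \ref{notBadTimeLem} we have $R_t(i\delta) \ge \sinh^{-1}(\sinh(\delta)e^{at}) \to \infty$, so the relation $R_{\tau(t)} = t+\delta$ determines $\tau$ on all of $[0,\infty)$ with inverse $\tau^{-1}(s) = R_s - \delta$, which is finite for every $s$ almost surely. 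In particular $T_s = \hat T_{\tau^{-1}(s)}$.

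Next I would produce a pathwise domination that does not reference the random time. From the ODE of Lemma \ref{reparamLem}, namely $\rd \hat T_t = -\sin(2\hat\Theta_t)/\sinh(2(t+\delta))\,\dt$, together with $\hat T_0 = T_0 = 0$, integration gives for every $t$ the pathwise estimate
\[
|\hat T_t| \le \int_0^t \frac{|\sin(2\hat\Theta_u)|}{\sinh(2(u+\delta))}\, \rd u \le V, \qquad V := \int_0^\infty \frac{|\sin(2\hat\Theta_u)|}{\sinh(2(u+\delta))}\, \rd u.
\]
Since $\sinh$ grows exponentially and $|\sin|\le 1$, $V$ is finite almost surely. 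Evaluating the bound at the random time $t = \tau^{-1}(s)$ and using $\hat T_{\tau^{-1}(s)} = T_s$ yields the key domination $|T_s| \le V$ for every $s \ge 0$ simultaneously, with $V$ independent of $s$.

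Finally I would control the second moment of $V$ by Minkowski's integral inequality, which interchanges the $L^2$ norm with the $u$-integral:
\[
\|V\|_2 \le \int_0^\infty \frac{\|\sin(2\hat\Theta_u)\|_2}{\sinh(2(u+\delta))}\, \rd u = U_\infty \le \frac{1}{2} - \frac{1}{2}\log\Big(\tanh{\textstyle\sqrt\frac{4a-\beta}{8}}\Big),
\]
where the equality is the definition of $U_\infty$ from Lemma \ref{tblemma} and the last inequality is Proposition \ref{Tbound2}. Combining with the pathwise bound gives $\E[T_s^2] \le \E[V^2] = \|V\|_2^2$, which is at most the square of the displayed quantity, as claimed. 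The main obstacle is exactly the randomness of $\tau$: the device that circumvents it is replacing $\hat T$ at the random time $\tau^{-1}(s)$ by the single $s$-independent dominating variable $V$, whose second moment is accessible through Minkowski's inequality rather than through evaluation at a fixed deterministic time.
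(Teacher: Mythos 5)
Your proof is correct, and it reaches the paper's bound $U_\infty^2$ by a genuinely different mechanism, although the underlying strategy---neutralizing the random time $\tau^{-1}(t)$ by dominating over all of $[0,\infty)$---is the same. The paper writes $\E[T_t^2] = \E[\hat T_{\tau^{-1}(t)}^2]$, expresses $\hat T_{\tau^{-1}(t)}^2$ as $\int_0^\infty \frac{\rd \hat T_s^2}{\rd s}\1\{s \le \tau^{-1}(t)\}\ds$, drops the indicator and interchanges expectation and integral, and then invokes Lemma \ref{tblemma} (Cauchy--Schwarz on $\E[|\hat T_s\sin(2\hat\Theta_s)|]$ together with the integrated bound $\|\hat T_s\|_2 \le U_s$) to bound the integrand by $\frac{\rd U_s^2}{\rd s}$; integrating yields $U_\infty^2$. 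You instead establish the pathwise domination $\sup_t |\hat T_t| \le V$ with $V = \int_0^\infty |\sin(2\hat\Theta_u)|/\sinh(2(u+\delta))\,\rd u$, so that $|T_s| \le V$ almost surely and simultaneously for all $s$, and then control $\|V\|_2 \le U_\infty$ by Minkowski's integral inequality. The two arguments are close cousins---expanding $\E[V^2]$ and applying Cauchy--Schwarz to the cross terms recovers Minkowski in this setting---but your decomposition bypasses Lemma \ref{tblemma} and the indicator/Tonelli step entirely, and it buys a slightly stronger conclusion: a single random variable with $\|V\|_2 \le U_\infty$ dominates the whole trajectory, not merely each fixed-time second moment. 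Your preliminary observations---that $\hat T_0 = T_0 = 0$, that $\tau^{-1}(s) = R_s - \delta$ is finite for every $s$ by Lemma \ref{notBadTimeLem} so the substitution $T_s = \hat T_{\tau^{-1}(s)}$ is legitimate, and that a pathwise bound is insensitive to whether the evaluation time is random---correctly close the only gaps such a transfer argument could have.
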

\begin{proof}
First note
\begin{align*}
\E[T_t^2] & = \E[\hat T_{\sigma^{-1}(t)}^2] \\
& = \E\left[\int_0^{\sigma^{-1}(t)}\frac{\rd \hat T_s^2}{\rd s} \ds\right] \\
& = \E\left[\int_0^\infty \frac{\rd \hat T_s^2}{\rd s} \1\{s \le \sigma^{-1}\} \ds\right] \\ 
& \le \int_0^\infty \E\Big[\Big|\frac{\rd \hat T_s^2}{\rd s}\Big|\Big] \ds.
\end{align*}
Using Lemma \ref{tblemma} and Proposition \ref{Tbound2}, we obtain
\begin{align*}
\E[T_t^2] & \le \int_0^\infty \E\Big[\Big|\frac{\rd \hat T_s^2}{\rd s}\Big|\Big] \ds \\
& \le \int_0^\infty \frac{\rd U_s^2}{\rd s} \ds \\
& = U_\infty^2 \le \Big[\frac{1}{2} - \frac{1}{2}\log\Big(\tanh{\textstyle\sqrt\frac{4a-\beta}{8}}\Big)\Big]^2.
\end{align*}
\end{proof}

\subsection{Non-triviality results}

Combining Lemma \ref{nonZeroLem} with Proposition \ref{unParamBound}, we obtain the following result.
\begin{theorem}\label{nonzeroTheorem}
	For any $a$ and $\sigma$, if there exists $\beta > 0$ with
	\[
	1 - \cos(2x) - 2a x\sin(2x) \le \beta x^2
	\]
	and
	\[
	2\sigma^2\Big[\frac{1}{2} - \frac{1}{2}\log\Big(\tanh{\textstyle\sqrt\frac{4a-\beta}{8}}\Big)\Big]^2 < 1,
	\]
	then the associated parafermionic observable is not uniformly zero.
\end{theorem}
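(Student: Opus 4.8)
The plan is to observe that this theorem is essentially immediate once the pieces assembled above are combined, with two small points requiring care. First I would reduce the claim to showing $F(0) \ne 0$: by Theorem \ref{limitFunction}, $F(z)$ equals $F(0)$ multiplied by a factor that is nonvanishing for every $z \in \Disk \setminus \{1\}$, so $F$ is uniformly zero if and only if $F(0) = 0$. It therefore suffices to bound $|F(0)|$ away from $0$, and since $F(0) = \E[e^{-2i\sigma T_\infty}]$, the natural tool is Lemma \ref{nonZeroLem}.

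To apply that lemma I must first check that $T_\infty$ is a symmetric random variable. This I would extract from the symmetry of the driving Brownian motion: the system for $(\Theta_t, R_t)$ started at $Z_0 = i\delta$ (so $\Theta_0 = 0$, $R_0 = \delta$) has drift in $\Theta_t$ odd in $\Theta_t$, while the $R_t$-equation depends on $\Theta_t$ only through $\cos(2\Theta_t)$, which is even. Hence the transformation $(\Theta_t, R_t, B_t) \mapsto (-\Theta_t, R_t, -B_t)$ preserves the law of the system. Since $T_t = \Theta_t - B_t$, this sends $T_t \mapsto -T_t$, so each $T_t(i\delta)$ is symmetric, and symmetry survives the limits defining $T_\infty$. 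In particular $F(0)$ is real.

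With symmetry in hand, applying Lemma \ref{nonZeroLem} with the coefficient $2\sigma$ in place of $\sigma$ gives
\[
F(0) = \E[e^{-2i\sigma T_\infty}] \ge 1 - 2\sigma^2\,\E[T_\infty^2].
\]
Here I would fix a $\beta > 0$ as in the first hypothesis, so that Proposition \ref{unParamBound} yields a bound on $\E[T_t^2]$ that is uniform in both $t$ and $\delta$. The one remaining technical point is to transfer this uniform bound to $T_\infty$: since $T_\infty = \lim_{t \to \infty}\lim_{\delta \downarrow 0} T_t(i\delta)$ and the right-hand side does not depend on $t$ or $\delta$, Fatou's lemma gives
\[
\E[T_\infty^2] \le \Big[\tfrac{1}{2} - \tfrac{1}{2}\log\big(\tanh{\textstyle\sqrt{\tfrac{4a-\beta}{8}}}\big)\Big]^2.
\]
Combining the two displays with the second hypothesis then shows $F(0) \ge 1 - 2\sigma^2\Big[\tfrac{1}{2} - \tfrac{1}{2}\log\big(\tanh{\textstyle\sqrt{\tfrac{4a-\beta}{8}}}\big)\Big]^2 > 0$, hence $F(0) \ne 0$ and $F$ is not uniformly zero.

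I expect the only genuine obstacle to be the bookkeeping around the limit interchange: I must ensure that both the symmetry of $T_t$ and the second-moment estimate persist through the $\delta \downarrow 0$ and $t \to \infty$ limits. Both follow from the uniformity of the estimate in Proposition \ref{unParamBound} together with Fatou's lemma, so no genuinely new estimate is needed — the substance of the argument lives entirely in the earlier reparametrization and variance bounds, and this final step is purely a matter of assembling them.
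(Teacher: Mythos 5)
Your proposal is correct and follows essentially the same route as the paper: the paper's proof is exactly the combination of Lemma \ref{nonZeroLem} with the uniform bound of Proposition \ref{unParamBound}, applied to $T_t(i\delta)$ and passed through the limits defining $F(0)$. The only difference is that you make explicit two points the paper leaves implicit --- the verification that $T_t(i\delta)$ is symmetric (via the sign symmetry $(\Theta_t,R_t,B_t)\mapsto(-\Theta_t,R_t,-B_t)$ of the reverse-flow SDE) and the Fatou/limit-interchange bookkeeping --- both of which are accurate and strengthen the write-up rather than diverge from it.
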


\begin{figure}[ht]
	\labellist
		\small
		\pinlabel ${\scriptstyle -1}$ [r] at 10 10
		\pinlabel ${\scriptstyle 0}$ [r] at 10 60
		\pinlabel ${\scriptstyle 1}$ [r] at 10 110
		\pinlabel ${\sigma}$ [r] at 5 60
		\pinlabel ${\scriptstyle 0}$ [t] at 10 10
		\pinlabel ${\scriptstyle 1}$ [t] at 110 10
		\pinlabel ${\scriptstyle \frac{1}{4}}$ [t] at 35 10
		\pinlabel ${\scriptstyle \frac{1}{2}}$ [t] at 60 10
		\pinlabel ${\scriptstyle \frac{3}{4}}$ [t] at 85 10
		\pinlabel ${a}$ [t] at 60 5
	\endlabellist
	\centering
\includegraphics[width=0.6\textwidth]{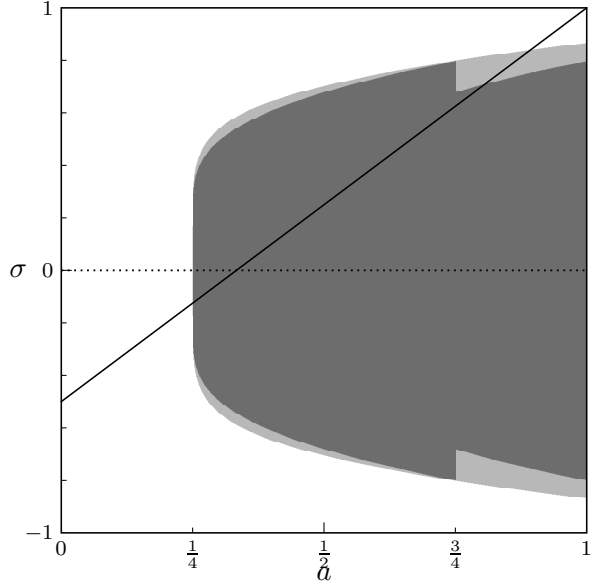}
\caption{A numerical approximation of the set of $a$ and $\sigma$ (shown in gray) for which Theorem \ref{nonzeroTheorem} can prove the parafermionic observable non-trivial.  To produce this figure, the smallest value of $\beta$ which satisfies the first inequality of Theorem \ref{nonzeroTheorem} was numerically approximated for each choice of $\kappa$, and then the corresponding range of valid $\sigma$'s was computed for that combination of $\kappa$ and $\beta$ using the second inequality.  The dark gray subset are those pairs for which non-triviality is established by the choices of $\beta$ in Lemma \ref{functionLem}.  The solid line corresponding to the conformal pairing of $a$ and $\sigma$.}\label{kappaSigmaFig}
\end{figure}

\begin{proof}
\begin{align*}
F(0) & = \lim_{t\rightarrow \infty} \lim_{\delta\downarrow 0} \E[e^{-2i\sigma T_t(i\delta)}] \\
& \ge 1 - 2 \sigma^2 \limsup_{t\rightarrow \infty,\;\delta\downarrow 0} \E[T_t^2(i\delta)] \\
& \ge 1 - 2\sigma^2\Big[\frac{1}{2} - \frac{1}{2}\log\Big(\tanh{\textstyle\sqrt\frac{4a-\beta}{8}}\Big)\Big]^2.
\end{align*}
\end{proof}

Figure \ref{kappaSigmaFig} illustrates a numerical approximation to the range for which this proof is valid.

We are most interested in the $\sigma$ which yields the conformally invariant observable, $\sigma = (3a-1)/2$, and hence we are most interested in finding $\beta$ with
\[
\frac{(3a-1)^2}{8}\Big[1 - \log\Big(\tanh{\textstyle\sqrt\frac{4a-\beta}{8}}\Big)\Big]^2 < 1.
\]
 We provide here explicit choices of $\beta$ which allows us to show the associated parafermionic observables to be non-trivial over a wide range of $\kappa$.  While it may be possible to expand the range slightly with better choices of $\beta$ (particularly for large values of $a$) expanding it beyond $\kappa \in [0,8]$, or indeed to any interval including either end point seems to require a different approach (mainly due to the coarseness of Lemma \ref{nonZeroLem}).

\begin{lemma}\label{functionLem}
	The follow hold:
	\begin{itemize}
		\item for $a \in [1/4,3/4]$, 
		\[
		1 - \cos(2x) - 2a x\sin(2x) \le x^2,
		\]
		\item and for $a \in [1/2,1]$.
		\[
		1 - \cos(2x) - 2a x\sin(2x) \le 2x^2.
		\]
	\end{itemize}
\end{lemma}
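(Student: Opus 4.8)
The plan is to exploit a simple but decisive structural feature: for each fixed $x$, the quantity on the left is an \emph{affine} function of $a$, namely
\[
\phi_a(x) := 1-\cos(2x) - 2ax\sin(2x) = (1-\cos 2x) - a\,(2x\sin 2x).
\]
An affine function of $a$ attains its maximum over an interval $[a_1,a_2]$ at one of the endpoints, so for every $x$,
\[
\sup_{a\in[a_1,a_2]} \phi_a(x) = \max\big(\phi_{a_1}(x),\,\phi_{a_2}(x)\big).
\]
Consequently the first claim (with $\beta=1$, $[a_1,a_2]=[1/4,3/4]$) follows once one knows $\phi_{1/4}(x)\le x^2$ and $\phi_{3/4}(x)\le x^2$ for all $x\in\R$, and the second (with $\beta=2$, $[a_1,a_2]=[1/2,1]$) follows from $\phi_{1/2}(x)\le 2x^2$ and $\phi_1(x)\le 2x^2$. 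Each of these is now a single-variable inequality, and I would prove all four.

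For a fixed value of $a$, I would first note $\phi_a$ is even, so it suffices to treat $x\ge 0$. For large $x$ the quadratic right-hand side dominates trivially: since $\phi_a(x)\le (1-\cos 2x) + 2ax|\sin 2x| \le 2 + 2ax$, the bound $\phi_a(x)\le \beta x^2$ holds as soon as $\beta x^2 \ge 2+2ax$, i.e. for all $x\ge x_*$ with $x_*=x_*(a,\beta)$ the larger root of $\beta x^2-2ax-2=0$ (explicitly $x_*<1.69$ for $a=1/4,\beta=1$ and $x_*<2.35$ for $a=3/4,\beta=1$). It then remains to verify each inequality on the bounded interval $[0,x_*]$, i.e. to show $\psi_a(x):=\beta x^2-\phi_a(x)\ge 0$ there. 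Here one uses that $\psi_a(0)=\psi_a'(0)=0$ together with the Taylor expansion near the origin; for the tight endpoint $a=1/4,\beta=1$ one computes $\psi_{1/4}(x)=\tfrac{2}{45}x^6+O(x^8)$, so the bound is an equality to second order at $x=0$ but strict just beyond, and one controls the rest of $[0,x_*]$ by direct estimation.

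The main obstacle is precisely this compact-interval verification in the two tight cases of the first inequality. For $a=1/4,\beta=1$ equality holds to second order at the origin, and for $a=3/4,\beta=1$ the ratio $\phi_a(x)/x^2$ climbs to roughly $0.98$ near $x\approx 2$ (indeed $\phi_{3/4}(2)/4\approx 0.981$); in both cases crude triangle-inequality bounds discard too much to close the gap. One therefore needs estimates sharp enough to capture the near-equality—for instance bounding $\cos$ and $\sin$ by truncated Taylor series with explicit remainders on $[0,x_*]$ and reducing to a polynomial inequality, or partitioning $[0,x_*]$ into finitely many subintervals and bounding $\psi_a'$ on each. By contrast, the two $\beta=2$ cases carry a comfortable margin (there $\max_x \phi_a(x)/x^2\approx 1.17<2$ at $a=1$), so only the $\beta=1$ pair demands genuine care.
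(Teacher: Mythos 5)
Your attempt cannot be compared against the paper's own argument for a simple reason: the paper never proves Lemma \ref{functionLem}. The lemma is stated bare, and the surrounding text (``Plots of the associated inequalities are given in Figure \ref{goodFig}'') makes clear that the only verification offered is graphical/numerical. So any rigorous argument here is new content, and your skeleton is a sound way to organize one. The observation that $\phi_a(x)=(1-\cos 2x)-a\,(2x\sin 2x)$ is affine in $a$, so that each bullet reduces to its two endpoint inequalities, is correct; so are the reduction by evenness, the crude bound $\phi_a(x)\le 2+2a|x|$ which disposes of $|x|\ge x_*$, and all of your numerical claims (I confirm $x^2-\phi_{1/4}(x)=\tfrac{2}{45}x^6+O(x^8)$, that $\phi_{3/4}(x)/x^2$ peaks near $0.98$ around $x\approx 1.95$, and that $\max_x \phi_1(x)/x^2\approx 1.17$). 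All four endpoint inequalities are in fact true, so nothing in your plan is headed toward a false statement.

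The gap is that the two tight $\beta=1$ cases are exactly the hard part, and you leave them at the level of ``direct estimation'' or ``partition and bound $\psi_a'$''; as written, your argument has the same epistemic status as the paper's plots. One of the two can be closed cleanly, and you should do so: for $a=1/4$ put $\psi(x)=x^2-\phi_{1/4}(x)$, note $\psi(0)=\psi'(0)=0$, and compute
\[
\psi''(x) \;=\; 2-2\cos 2x-2x\sin 2x \;=\; 4\sin x\,\bigl(\sin x - x\cos x\bigr)\;\ge\;0 \quad \text{on } [0,\pi],
\]
since on $[0,\pi/2]$ one has $\sin x - x\cos x=\cos x(\tan x - x)\ge 0$ and on $[\pi/2,\pi]$ both factors are nonnegative; hence $\psi\ge 0$ on $[0,\pi]$, and $\pi$ exceeds your $x_*\approx 1.69$, so the crude bound covers the rest. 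Moreover, since $\phi_a$ is decreasing in $a$ wherever $x\sin 2x\ge 0$, this same computation gives $\phi_{3/4}\le\phi_{1/4}\le x^2$ on $[0,\pi/2]$, so the only region your outline leaves genuinely unresolved is $a=3/4$ on $[\pi/2,x_*]$ (with $x_*\approx 2.36$), where the margin bottoms out at roughly $1.7\%$ near $x\approx 1.95$. There I see no slick argument (convexity of $x^2-\phi_{3/4}$ fails just below $\pi/2$), so the finite Taylor-with-remainder or subdivision check you describe must actually be carried out, with explicit Lipschitz constants and mesh; once executed it is a rigorous finite verification, which is precisely what distinguishes a proof from the figure the paper relies on.
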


Unfortunately, the inequality in Theorem \ref{nonzeroTheorem} may not be solved in closed form forcing us to numerically approximate the range.  Plots of the associated inequalities are given in Figure \ref{goodFig}.  

\begin{figure}[htb]
	\labellist
		\small
		\pinlabel ${\scriptstyle 0.0}$ [r] at 9.5 9.5
		\pinlabel ${\scriptstyle 0.2}$ [r] at 9.5 38.5
		\pinlabel ${\scriptstyle 0.4}$ [r] at 9.5 67
		\pinlabel ${\scriptstyle 0.6}$ [r] at 9.5 96.5
		\pinlabel ${\scriptstyle 0.8}$ [r] at 9.5 124.5
		\pinlabel ${\scriptstyle 1.0}$ [r] at 9.5 153.5
		\pinlabel ${\scriptstyle 0.0}$ [t] at 11.5 8.5
		\pinlabel ${\scriptstyle 0.2}$ [t] at 56 8.5
		\pinlabel ${\scriptstyle 0.4}$ [t] at 101 8.5
		\pinlabel ${\scriptstyle 0.6}$ [t] at 146 8.5
		\pinlabel ${\scriptstyle 0.8}$ [t] at 190.5 8.5
		\pinlabel ${\scriptstyle 1.0}$ [t] at 235 8.5
		\pinlabel ${\scriptstyle a}$ [l] at 240 9.5
	\endlabellist
	\centering
\includegraphics[width=0.7\textwidth]{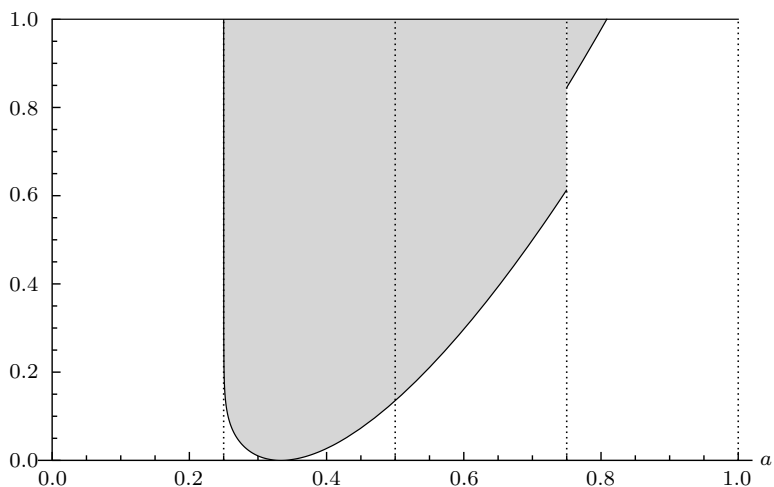}
\caption{A graph showing the ranges over which the inequalities (illustrated as a shaded region) used in the proof of Theorem \ref{confRange} holds.  Vertical dashed lines indicate the ranges where the various choices of $\beta$ hold.}\label{goodFig}
\end{figure}

\begin{theorem}\label{confRange} 
	Let $a_0$ be the solution to
	\[
	\frac{(3a-1)^2}{8}\Big[1 - \log\Big(\tanh{\textstyle\sqrt\frac{4a-1}{8}}\Big)\Big]^2 = 1
	\]
	nearest to $a = 1/4$ and $a_1$ be the solution to
	\[
	\frac{(3a-1)^2}{8}\Big[1 - \log\Big(\tanh{\textstyle\sqrt\frac{2a-1}{4}}\Big)\Big]^2 = 1
	\]
	nearest to $a = 1$.  Then, the conformal parafermionic observable is non-trivial for all $a \in (a_0,a_1)$ which is approximately {\small$(0.2500000022\ldots,\linebreak[1] 0.8084748753\ldots)$}, which corresponds to $\kappa$ in {\small$(2.4737936342\ldots,\linebreak[1] 7.9999999295\ldots)$}\footnote{These values were computed in Mathematica $7$ with a working precision of $20$ digits.}. 
\end{theorem}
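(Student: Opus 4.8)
The plan is to specialize the general non-triviality criterion of Theorem \ref{nonzeroTheorem} to the conformal spin $\sigma = b = (3a-1)/2$ and then feed it the two explicit choices of $\beta$ furnished by Lemma \ref{functionLem}. First I would substitute $\sigma = (3a-1)/2$ into the second hypothesis of Theorem \ref{nonzeroTheorem}: since $2\sigma^2 = (3a-1)^2/2$ and $\big[\tfrac12 - \tfrac12\log\tanh{\textstyle\sqrt{\tfrac{4a-\beta}{8}}}\big]^2 = \tfrac14\big[1 - \log\tanh{\textstyle\sqrt{\tfrac{4a-\beta}{8}}}\big]^2$, the criterion collapses to the single scalar condition
\[
\Phi_\beta(a) := \frac{(3a-1)^2}{8}\Big[1 - \log\Big(\tanh{\textstyle\sqrt{\tfrac{4a-\beta}{8}}}\Big)\Big]^2 < 1.
\]
Taking $\beta = 1$, which is admissible on $[1/4,3/4]$ by Lemma \ref{functionLem}, reproduces exactly the first transcendental expression in the statement, whose relevant root is $a_0$; taking $\beta = 2$, admissible on $[1/2,1]$, and using $\sqrt{(4a-2)/8} = \sqrt{(2a-1)/4}$ reproduces the second expression, whose relevant root is $a_1$. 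By Theorem \ref{nonzeroTheorem}, the conformal observable is non-trivial at any $a$ for which at least one of $\Phi_1(a) < 1$ or $\Phi_2(a) < 1$ holds with $\beta$ in its admissible window.

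The core of the argument is then to show that the union of these two valid sets covers all of $(a_0,a_1)$, which I would do by a continuity and sign-change analysis of $\Phi_1$ and $\Phi_2$. As $a \downarrow 1/4$ the inner logarithm diverges to $-\infty$ while $(3a-1)^2 \to 1/16$, so $\Phi_1(a) \to +\infty$; hence $a_0$, the root nearest $1/4$, is precisely the point where $\Phi_1$ first drops below $1$. Checking $\Phi_1(3/4) < 1$ and that $\Phi_1$ has no further crossing of $1$ on $(a_0,3/4]$ gives $\Phi_1 < 1$ throughout $(a_0,3/4]$, so non-triviality holds on this subinterval with $\beta = 1$. Symmetrically, $\Phi_2(a) \to +\infty$ as $a \downarrow 1/2$ while $\Phi_2(1) \approx 1.57 > 1$, so on $[1/2,1]$ the set $\{\Phi_2 < 1\}$ is an interval whose right endpoint is the root $a_1$ nearest $1$. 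Verifying $\Phi_2(3/4) < 1$ places the left endpoint of this interval below $3/4$, and confirming the absence of additional crossings on $[3/4,1]$ yields $\Phi_2 < 1$ on $[3/4,a_1)$, giving non-triviality there with $\beta = 2$. Since $a_0 < 1/2 < 3/4 < a_1$, the identity $(a_0,3/4] \cup [3/4,a_1) = (a_0,a_1)$ patches the two regions together to cover the entire claimed interval (note $a = 3/4$ lies in both admissible windows, so it is handled by either choice).

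Finally I would record the numerics: solving the two transcendental equations gives $a_0 \approx 0.2500000022$ and $a_1 \approx 0.8084748753$, and converting via $\kappa = 2/a$ produces the stated range $\kappa \in (2.47\ldots,7.99\ldots)$. The main obstacle is the step asserting that each $\Phi_\beta$ meets the level $1$ exactly once in its relevant window, i.e. that $\{\Phi_1 < 1\}$ and $\{\Phi_2 < 1\}$ really are the single intervals $(a_0,3/4]$ and $[3/4,a_1)$ with no intervening gaps. Because $\Phi_1$ and $\Phi_2$ are transcendental and cannot be solved in closed form, this cannot be settled symbolically; I would instead establish it by a careful numerical scan supplemented by a monotonicity estimate for the behaviour depicted in Figure \ref{goodFig}, which is also what forces $a_0$ and $a_1$ to be computed numerically, consistent with the approximate values quoted in the statement.
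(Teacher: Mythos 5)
Your proposal is correct and follows essentially the same route as the paper, which likewise specializes Theorem \ref{nonzeroTheorem} to $\sigma = b = (3a-1)/2$ to obtain the condition $\frac{(3a-1)^2}{8}\big[1-\log\big(\tanh\sqrt{\frac{4a-\beta}{8}}\big)\big]^2 < 1$, inserts $\beta = 1$ on $[1/4,3/4]$ and $\beta = 2$ on $[1/2,1]$ from Lemma \ref{functionLem}, and resolves the resulting transcendental inequalities numerically (Figure \ref{goodFig} and the Mathematica computation). Your explicit patching of the two validity intervals at $a = 3/4$ and the sign-change analysis near $a = 1/4$ and $a = 1$ simply spell out in words what the paper delegates to the figure and the numerics.
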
 

\section*{Acknowledgements}
The author would like to thank Greg Lawler for helpful discussions during the preparation of this paper.

\bibliographystyle{plain}
\bibliography{ParafermionBib}
\end{document}